\theoremstyle{plain}
\newtheorem{theorem}{Theorem}
\newtheorem{lemma}[theorem]{Lemma}
\newtheorem{proposition}[theorem]{Proposition}
\theoremstyle{definition}
\newtheorem{definition}[theorem]{Definition}
\newtheorem{remark}[theorem]{Remark}
\numberwithin{theorem}{section}
\DeclareMathOperator*{\esssup}{ess\,sup}
\DeclareMathOperator*{\essinf}{ess\,inf}
\DeclareMathOperator*{\argmin}{arg\,min}
\newcommand{\dive}{{\operatorname{div}}}
\newcommand{\qtext}[1]{\quad\mbox{#1}\quad}
\newcommand{\qqtext}[1]{\qquad\mbox{#1}\qquad}
\newcommand{\pf}{{}_{\#}}
\newcommand{\R}{\mathbb{R}}
\newcommand{\mrest}{
  \,\raisebox{-.127ex}{\reflectbox{\rotatebox[origin=br]{-90}{$\lnot$}}}\,%
}
\newcommand{\1}{\mathds 1}
\newcommand{\eps}{\varepsilon}
\newcommand{\fF}{\mathcal{F}}
\newcommand{\PP}{\mathbb{P}}
\newcommand{\dD}{\mathcal{D}}
\newcommand{\eE}{\mathcal{E}}
\newcommand{\pP}{\mathcal{P}}
\newcommand{\qQ}{\mathcal{Q}}
\newcommand{\lL}{\mathcal{L}}
\newcommand{\uL}{\underline{L}}
\newcommand{\mM}{\mathcal{M}}
\newcommand{\hH}{\mathcal{H}}
\newcommand{\wW}{\mathcal{W}}
\renewcommand{\O}{\Omega}
\renewcommand{\o}{\omega}
\newcommand{\pO}{{\partial\Omega}}
\newcommand{\Oi}{{\Omega^{\mathrm o}}}
\newcommand{\g}{\gamma}
\newcommand{\rd}{\mathrm d}
\newcommand{\Ent}{\mathrm{Ent}}
\newcommand{\Lip}{\mathrm{Lip}}
\newcommand{\grad}{\mathrm{grad}}
\newcommand{\tr}{\operatorname{tr}}
\newcommand{\D}{\mathcal D}
\def\la{\lambda}
\def\Del{\Delta}
\def\si{\sigma}
\def\eps{\varepsilon}
\def\Om{\Omega}
\def\dOm{\partial \Omega}
\def\pd{\partial}
\def\lp{\left(}
\def\rp{\right)}
\def\hoch{^}
\newcommand{\Iint}{{\mathrm o}}
\begin{document}

\title[]{A gradient flow that is none: heat flow with Wentzell boundary conditions}

\author[M. Bormann]{Marie Bormann}
\address{Universit\"at Leipzig, Fakult\"at f\"ur Mathematik und Informatik, Augustusplatz 10, 04109 Leipzig, Germany and Max Planck Institute for Mathematics in the Sciences, 04103 Leipzig, Germany}
\email{bormann@math.uni-leipzig.de}

\author[L. Monsaingeon]{L\'{e}onard Monsaingeon}
\address{Grupo de F\'isica Matem\'atica, Departamento de Matem\'atica, Instituto Superior T\'ecnico, Av. Rovisco Pais 1049-001 Lisboa, Portugal
and
Institut \'Elie Cartan de Lorraine, Universit\'e de Lorraine, Site de Nancy B.P. 70239, F-54506 Vandoeuvre-l\`es-Nancy Cedex, France
}
\email{leonard.monsaingeon@tecnico.ulisboa.pt}

\author[D.R.M. Renger]{D.R. Michiel Renger}
\address{Technische Universit\"at M\"unchen, School of CIT, Boltzmannstra{\ss}e 3, 85748 Garching bei München}
\email{d.r.m.renger@tum.de}

\author[M. von Renesse]{Max von Renesse}
\address{Universit\"at Leipzig, Fakult\"at f\"ur Mathematik und Informatik, Augustusplatz 10, 04109 Leipzig, Germany}
\email{renesse@math.uni-leipzig.de}

\date{\today}

\begin{abstract} We establish a  representation of the heat flow with Wentzell boundary conditions on smooth domains as gradient descent dynamics for the entropy in a suitably extended Otto manifold of  probability measures with  additional boundary parts. Yet it is shown that for weak boundary diffusion, the associated Fokker–Planck dynamics cannot be recovered from any entropy-driven metric JKO-Wasserstein scheme, at least if the underlying point metric satisfies certain natural regularity assumptions.  This discrepancy is illustrated in competing large-deviation heuristics in the Sanov and Schilder regimes.
    \end{abstract}
\maketitle
\section{Introduction}

In a short but  influential early career paper~\cite{MR1477263}  Karl-Theodor Sturm showed that a reversible diffusion process with known invariant measure is in general not uniquely characterised by its intrinsic metric. Given the  success of optimal transport theory over the last two decades, one might modify Sturm's question and wonder which reversible diffusion processes with fixed invariant measure can be constructed via a  Wasserstein gradient flow for the Boltzmann-entropy and \textit{some}  metric $d$. Identifying a gradient flow structure for the associated Fokker-Planck equation is desirable for the application of existing theory for gradient flows and for additional physical and analytical insight. In this note we point out that such a metric might not exist, in general.

\smallskip

To this aim, we discuss the heat flow on a bounded domain with so-called Wentzell boundary conditions~\cite{MR121855}, which appears also as the Fokker-Planck equation for Brownian motion with sticky-reflecting boundary diffusion.
One characteristic feature of this process is an invariant measure with two mutually singular components on the interior and on the boundary of the domain.
Even though the process is reversible, we will show in the weak boundary diffusion regime that the associated heat flow cannot admit a JKO-style Wasserstein gradient flow representation \cite{JKO98,MR2401600} for any metric on the base space satisfying certain natural regularity conditions (as detailed in section \ref{section:metricsection} below).
Interestingly, the heat flow can nevertheless be represented (at least formally) as a \textit{gradient system} in the sense of the framework \cite{MR3269725} extending  the well-known Otto calculus \cite{MR1842429} to a wider class of dissipative evolution equations.
In particular, this provides an example of a class of reversible diffusion processes in which the differential Otto calculus resolves all details of the dynamics whereas the purely metric Wasserstein JKO-method encounters limitations.

\smallskip 

The differences in the two approaches are also reflected in two distinctive heuristics which are commonly used for the identification of the sought-after gradient-flow structure and which are both based on large-deviation arguments. The metric picture is often justified from its relation to the  Schr\"{o}dinger problem in conjunction with a Schilder-type large-deviation principle for slowed-down diffusion processes, leading to a Varadhan-formula for the intrinsic metric \cite{MR2873864}, whereas the gradient system representation is deduced from a dynamical Sanov-type large-deviation principle for the hydrodynamic many-particle limit cf.~\cite{adams2013large,MR3269725}. In the regime of strong boundary diffusion, both heuristics lead to the same metric gradient flow representation, but the coincidence breaks down if the boundary diffusion falls below a certain threshold, leading to  a parameterized family of distinct diffusion processes with common intrinsic metric  and common invariant measure. A~key aspect of this phase transition is the loss of joint lower semicontinuity  of the Lagrangian function for the associated Hamilton-Jacobi equation and their viscous approximations, which lie at the heart of the theory, pointing to some of the fundamental challenges of the metric approach in very irregular settings.

\section{Our Example: Brownian motion with sticky-reflecting boundary diffusion (SRBD-Brownian Motion)}

The basic object of this study is the family of Brownian motions on bounded domains with sticky-reflecting boundary diffusion. Let $\Om$ be a compact connected subset of $\mathbb{R}^d, d\ge 2$ with nonempty interior $\Oi$ and smooth connected boundary $\dOm$. We consider the semigroup  on $C(\Om)$ induced by the  Feller generator $(\dD(\qQ),\qQ)$ given by
\begin{align}\label{eq:defgen}
\dD(\qQ) &= \{f\in C(\Om)\ |\ \qQ f\in C(\Om)\} \notag \\
\qQ f &= \Del f\1_{\Oi} + \lp a\Del^\tau f - \pd_N f\rp\1_{\dOm},
\end{align}
where $\pd_N f$ is the outer normal derivative, $\Del^\tau$ is the Laplace-Beltrami operator on $\dOm$ and $a > 0$. This generator defines a Markov process on $\Om$ with continuous trajectories, which performs a Brownian motion in the interior $\Oi$, a  tangential Brownian diffusion along the boundary $\dOm$, where the strength of the tangential diffusion is given by the parameter $a\geq 0$, and is subject to 'sticky reflection' from the boundary back into the interior, resulting ultimately in a positive sojourn time on the boundary\footnote{We wish to avoid a factor $1/2$ in the infinitesimal generator. Thus, to be precise, the underlying Brownian motion both in the interior and on the boundary is sped up by a factor 2.}.

\smallskip 
Denoting the Lebesgue measure restricted to $\Om$ by $\lambda$ and the Hausdorff measure on $\dOm$ by $\sigma$, choosing $c=1/(\la(\O)+\sigma(\pO))$ makes
\begin{equation*}
\mu := c (\la + \si)
\end{equation*}
a probability measure, and it is easy to check that $-\qQ$ is a positive $\mu$-symmetric operator.
In particular, the process is reversible with respect to the invariant measure $\mu$.
The corresponding Dirichlet form is $(\dD(\eE),\eE)$
\begin{equation}\label{eq:DF}
\eE(f):= \int_\Oi |\nabla f|^2 c\,\rd\la+ a \int_{\dOm} |\nabla^\tau f|^2 c\,\rd\si
\end{equation}
 with $\nabla^\tau  f$ the tangential gradient along $\pO$, and the domain $\dD(\eE)$ is the closure of $C^1(\Om)$ under the norm $\|\cdot\|_{\dD}^2:=\eE(\cdot)+ \|\cdot\|^2_{L^2(\mu)}$, see~\cite{MR3613700}.

The abstract Fokker-Planck equation $\partial_t\rho=\qQ^*\rho$ is, by definition, $\frac{d}{dt}\int_\O \varphi \rho=\int_\O (\qQ\varphi) \rho=\int_\O\varphi (\qQ^*\rho)$ for all $\varphi\in \dD(\qQ)$.
Here and throughout it will be useful do decompose arbitrary measures $\rho$ on $\Om$ in interior/boundary parts
\begin{equation*}
\rho=\omega + \gamma \in\pP(\Om) 
\qqtext{with}
\begin{cases}	\omega\coloneqq\rho \mrest_{\Oi}, \\ \gamma\coloneqq\rho\mrest_{\dOm},\end{cases}
\end{equation*}
and we often identify the measures $\o,\g$ with their density w.r.t. the measures $\lambda,\sigma$ in the interior and on the boundary. Using this representation and integrating by parts, we compute the Fokker-Planck equation explicitly:
\begin{align*}
    \int_\Oi\varphi\partial_t\o + \int_\pO\varphi\partial_t\g
    &=\int_\O\varphi\partial_t\rho
    =\frac{d}{dt}\int_\O \varphi \rho
    =\int_\O (\qQ\varphi)\rho
    \\
    &=\int_\Oi\Delta\varphi\, \o + \int_\pO (a\Delta^\tau\varphi - \partial_N\varphi)\,\g
    \\
    &=\int_\Oi\varphi \Delta\o + \int_\pO\partial_N\varphi\,\o - \varphi\partial_N\o + \int_\pO \varphi a\Delta^\tau\g-\partial_N\varphi\,\g
    \\
    &=\int_\Oi\varphi \Delta\o + \int_\pO\varphi(a\Delta^\tau\g-\partial_N\o)  + \int_\pO \partial_N\varphi(\o-\g)
\end{align*}
for all $\varphi$, which is of course the weak formulation of the heat flow with boundary conditions:
\begin{equation}\label{eq:FPE}
\begin{cases} \partial_t\omega = \Delta \omega &\text{ in } \Oi, \\ 
	\omega = \gamma &\text{ on } \dOm, \\ 
	\partial_t\gamma = a\Delta^\tau \gamma - \pd_N \omega &\text{ on } \dOm.
\end{cases}
\end{equation}
The sort of boundary behaviour considered here first came up in a work of Wentzell~\cite{MR121855}, and is accordingly also referred to as \textit{Wentzell boundary condition}. In the sequel we shall use the name \textit{Wentzell heat flow} for this evolution.

First rigorous constructions of such processes on special domains date back to~\cite{MR126883,MR287612,MR929208}. An efficient construction via Dirichlet forms was given recently in \cite{MR3613700}.
Renewed interest has arisen lately from applications in interacting particle systems with mean-field or zero-range pair interaction~\cite{MR4096131,MR4575023,MR4704529}.  Poincar\'{e} and logarithmic Sobolev inequalities for SRBD-Brownian motion are derived in in~\cite{konarovskyi2021spectralgapestimatesbrownian, brw, bormann2025cheegertypeinequalitydriftlaplacian}, in spite of the fact that there is no finite lower (generalized) Ricci curvature bound.   The JKO-Wasserstein gradient flow structure in the regime of regular boundary diffusion speed ($a=1$) has been investigated in \cite{MR4901547} and a Schilder-type large-deviation principle has been studied recently in~\cite{casteras2025largedeviationsstickyreflectingbrownian}.

\section{Wentzell heat flow  as Gradient system}\label{sec:gradsyst}
In this section we aim at identifying the Wentzell heat flow~\eqref{eq:FPE} as a differential gradient flow.
Here we do not claim full mathematical rigor and completely disregard smoothness issues.
We first discuss which infinitesimal perturbations $\dot\rho$ should give admissible kinematics, and how to measure kinetic energy, taking into account the boundary coefficient $a>0$.
This allows to construct a pseudo-Riemannian structure \textit{\`a la Otto} \cite{MR1842429}.
This point of view provides an infinitesimal differential structure for each value of $a>0$, and identifies in particular the Wentzell heat flow as the gradient-flow
\begin{equation}
\label{eq:FPE_iff_grad_Ent}
\partial_t\rho=\qQ^*\rho
\quad\iff\quad
\dot\rho=-\grad\,\Ent_\mu(\rho)
\end{equation}
of the relative entropy
$$
\Ent_\mu(\rho)=\int_\O\frac{\rd\rho}{\rd\mu}\log\left(\frac{\rd\rho}{\rd\mu}\right)\rd\mu
=
\int_\Oi \o\log\o +\int_\pO\g\log\g.
$$
(In this paper we write $\partial_t\rho$ for partial derivatives with respect to time in PDEs, while the very same object is rather denoted $\dot\rho$ or $\frac{d\rho}{dt}$ when viewed as a tangent vector in this Otto formalism.)
Interestingly enough, boundary conditions play a particular role and the trace coupling $\o|_\pO=\g$ will be necessary to actually define the object $\grad\,\Ent_\mu(\rho)$ by chain rule and duality.
Once this is done we will discuss the connection with large-deviation principles for microscopic particle systems.
As suggested in \cite{MR3269725}, and at least for smooth settings, the reversibility of the process is a sufficient condition for the corresponding Fokker-Planck equation to possess a $\Psi/\Psi^*$ gradient flow structure.
We will show that, despite the singular nature of our problem, this remains essentially valid, with however a few twists due to the peculiar boundary conditions.

\subsection{Admissible kinematics}
Let us point out that the infinitesimal variations $s=\dot\rho\in \mM(\O)$ are always dual to functions $\phi$, with canonical pairing $\langle \phi,s\rangle=\int \phi\, \rd s$.
Moreover, given that our stochastic processes and densities behave differently in the interior and along the boundary, we think of $\dot\rho =s\in \mM(\O)$ as pairs $s=(s_\Iint,s_\partial)$ acting in $\Oi,\pO$.

As always we enforce local conservation of mass, which means in particular that a particle hitting the boundary from the interior with velocity $V_\Iint$ does not exit the domain, but can stick to the boundary and start evolving thereupon with a new tangential velocity $V_\partial$.
From an Eulerian viewpoint, conservation of mass across the boundary simply means that the outflux of interior densities corresponds to a source term for the boundary density in the equations of motion.
In formulas, this means that we think of admissible kinematics $\dot\rho=s$ as
\begin{equation}
\label{eq:kinematics}
s=(s_\Iint,s_\partial)
\qqtext{s.t.}
\begin{cases}
s_\Iint=-\dive(\o V_\Iint) & \text{in }\Oi\\
s_\partial=-\dive^\tau(\g V_\partial)+\o V_\Iint\cdot N & \text{in }\pO
\end{cases}
\end{equation}
for some interior/boundary pair of velocity fields $V=(V_\Iint,V_\partial)$ (with $V_\partial$ tangent to $\pO$).
In a more compact form this simply means
$$
\langle s,\xi\rangle = \int_\Oi \o V_\Iint\cdot \nabla \xi + \int_\pO \g V_\partial\cdot \nabla^\tau \xi
= \int_\O \rho V\cdot \nabla \xi.
$$
This is exactly the standard weak formulation appearing in the usual continuity equation $\partial_t\rho + \dive(\rho V)=0$ for the total density $\rho=\o+\g$ with no-flux boundary conditions.
Indeed in this simple picture there is no exchange of mass between $\O$ and the outer world $\O^c$: particles are free to move between $\Oi$ and $\pO$, but any mass initially contained in $\O=\Oi\cup\pO$ remains so.
The fact that we are considering here $\rho=\o+\g$ with singular parts $\g$ on the boundary simply suggests considering $V$ as two separate, independent parts $V=(V_\Iint,V_\partial)$.

We now \emph{choose} to measure kinetic energy as
\begin{equation}
\label{eq:def_kinetic_E}
\|V\|^2_\rho 
\coloneqq
\int_\Oi |V_\Iint|^2\o + \frac 1a\int_\pO|V_\partial|^2\g.
\end{equation}
The difference between interior and boundary contributions is reflected in the local  $\frac 1a$ scaling of the Euclidean metrics on $\pO$.
Similarly, for smooth enough functions $\phi$ we set
\begin{equation}
\label{eq:def_|phi|_rho}
\|\phi\|^2_\rho
\coloneqq
\int_\Oi |\nabla\phi|^2\o +a\int_\pO|\nabla^\tau\phi|^2\g.
\end{equation}
Note that the Dirichlet form \eqref{eq:DF} accordingly reads
$$
\eE(\xi)=\|\xi\|^2_\mu,
$$
where we recall that $\mu\in\pP(\O)$ is the stationary, symmetric measure for SRBD Brownian Motion.
All of this is of course very consistent with the initial interpretation of $a>0$ as a boundary diffusion coefficient in \eqref{eq:defgen}\eqref{eq:FPE}: the larger $a$ the stronger the diffusion, and larger $a$'s accordingly correspond to cheaper displacement along the boundary according to \eqref{eq:def_kinetic_E}.
Since many $V$'s possibly represent the same $s$ one natural selection principle consists in minimizing the kinetic energy:
\begin{proposition}
Let $s=(s_\Iint,s_\partial)$ be represented by at least one admissible pair $V=(V_\Iint,V_\partial)$ in the sense of \eqref{eq:kinematics}, with $\|V\|^2_\rho <\infty$.
Then
$$
\|s\|_\rho^2
\coloneqq
\min\limits_{V=(V_\Iint,V_\partial)}\Bigg\{ \|V\|^2_\rho
\quad \text{s.t.\ the representation }
\eqref{eq:kinematics}\text{ holds}
\Bigg\}
$$
is given by
\begin{equation}
\label{eq:s_phi_norms}
\|s\|_\rho^2
=\int_\Oi|\nabla\phi|^2\o +a\int_\pO|\nabla^\tau\phi|^2\g
=\|\phi\|^2_\rho
\end{equation}
where $\phi$ is the unique solution (up to additive constants) of
\begin{equation}
\label{eq:elliptic_id}
\begin{cases}
s_\Iint=-\dive(\o\nabla\phi) & \text{in }\Oi\\
s_\partial=-a\dive^\tau(\g\nabla^\tau\phi)+\o\partial_N\phi & \text{in }\pO
\end{cases}.
\end{equation}
\end{proposition}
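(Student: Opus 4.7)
The plan is to combine the direct method of the calculus of variations with a Helmholtz--type orthogonality argument. First I would note that the feasible set of admissible representatives $V$ of a fixed $s$ is an affine subspace, being described by the linear constraint \eqref{eq:kinematics}, and that the functional $\|V\|^2_\rho$ in \eqref{eq:def_kinetic_E} is strictly convex in the flux variables $(\o V_\Iint,\g V_\partial)$. Together with the non-emptiness assumption, this yields existence and $\rho$-a.e.\ uniqueness of a minimizer $V^*$ by standard weak-compactness arguments.

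To identify $V^*$ I would propose the ansatz $V^*_\Iint=\nabla\phi$ in $\Oi$ and $V^*_\partial=a\nabla^\tau\phi$ on $\pO$ for a common scalar potential $\phi$. Substituting into \eqref{eq:kinematics} shows this ansatz represents $s$ if and only if $\phi$ solves the coupled system \eqref{eq:elliptic_id}, whose weak formulation reads
\begin{equation*}
\int_\Oi \o\,\nabla\phi\cdot\nabla\xi + a\int_\pO \g\,\nabla^\tau\phi\cdot\nabla^\tau\xi=\langle s,\xi\rangle \qquad\forall\xi.
\end{equation*}
The bilinear form on the left is exactly the inner product associated to \eqref{eq:def_|phi|_rho}, and unique solvability of $\phi$ (modulo additive constants) follows from Lax--Milgram on the quotient Hilbert space of test functions modulo constants, provided $\xi\mapsto\langle s,\xi\rangle$ is bounded therein; this last point is immediate since any admissible $V$ yields $|\langle s,\xi\rangle|\le \|V\|_\rho\|\xi\|_\rho$ by Cauchy--Schwarz applied to the weak form of \eqref{eq:kinematics}.

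The crux is to verify optimality by orthogonality. Any admissible competitor can be written as $V=V^*+W$, where the perturbation $W=(W_\Iint,W_\partial)$ represents the zero source and hence satisfies $\dive(\o W_\Iint)=0$ in $\Oi$ together with the boundary compatibility $-\dive^\tau(\g W_\partial)+\o W_\Iint\cdot N=0$ on $\pO$. Plugging in the ansatz for $V^*$ and integrating by parts separately in $\Oi$ and along the closed manifold $\pO$ gives
\begin{align*}
\int_\Oi \o V^*_\Iint\cdot W_\Iint+\frac 1a\int_\pO\g V^*_\partial\cdot W_\partial
&=\int_\Oi \o\nabla\phi\cdot W_\Iint+\int_\pO\g\nabla^\tau\phi\cdot W_\partial\\
&=\int_\pO\phi\,(\o W_\Iint\cdot N)-\int_\pO\phi\,\dive^\tau(\g W_\partial)=0,
\end{align*}
the last cancellation being precisely the compatibility on $W$. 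Consequently $\|V\|^2_\rho=\|V^*\|^2_\rho+\|W\|^2_\rho\ge\|V^*\|^2_\rho$, with equality only when $W=0$ $\rho$-a.e., confirming that $V^*$ is the unique minimizer. A direct substitution then yields $\|V^*\|^2_\rho=\int_\Oi|\nabla\phi|^2\o+a\int_\pO|\nabla^\tau\phi|^2\g=\|\phi\|^2_\rho$, which is \eqref{eq:s_phi_norms}.

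The main obstacle is the solvability of the coupled elliptic problem \eqref{eq:elliptic_id}: the interior and tangential pieces are glued through the flux term $\o\partial_N\phi$, which is not a standard Dirichlet/Neumann/Robin condition. Fortunately the variational picture absorbs both contributions into a single non-negative quadratic form on $\O$ (essentially the $\rho$-weighted Dirichlet form \eqref{eq:DF}), so Lax--Milgram applies cleanly at the formal level adopted in this section. Honest difficulties would only arise if $\rho$ degenerates on sets of positive measure, but such pathologies are explicitly sidestepped in the smooth regime we are working in.
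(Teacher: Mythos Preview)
Your proof is correct but follows a genuinely different route from the paper. The paper argues \emph{from} the minimizer: it first establishes existence of $V^*$ by convexity, then uses first-order optimality conditions with compactly supported divergence-free perturbations to show separately that $V^*_\Iint=\nabla\phi_\Iint$ and $V^*_\partial=a\nabla^\tau\phi_\partial$ for two a priori unrelated potentials, and finally spends the bulk of the argument on a matching step---constructing admissible perturbations from auxiliary Neumann-type problems to force $\phi_\Iint|_{\pO}=\phi_\partial$. You instead argue \emph{towards} the minimizer: you construct the candidate $\phi$ directly from the unified weak formulation via Lax--Milgram (which automatically produces a single potential with matched traces), and then verify optimality by a clean Helmholtz-type orthogonality computation. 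Your approach is shorter and bypasses the delicate matching argument entirely; the price is that you must invoke coercivity of the weighted bilinear form on the quotient modulo constants (a Poincar\'e-type inequality), which the paper's perturbation method avoids. At the formal level adopted in this section both are equally valid, and your orthogonality argument is arguably more transparent about \emph{why} gradients of a common potential are optimal.
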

This is a rather standard $H^{-1}_\rho/\dot H^1_\rho$ elliptic identification between admissible perturbations $s=\dot\rho$ and scalar potentials $\phi=\phi_s$, except that the corresponding elliptic system couples $\Oi$ to $\pO$ via the implicit boundary conditions for $\phi$: the $\phi$ appearing in the tangential elliptic PDE along $\pO$ in \eqref{eq:elliptic_id} must be the boundary trace $\phi|_{\pO}$ of $\phi$ appearing in the interior PDE inside $\Oi$.

Note carefully that any admissible $s$ will have finite norm $\|s\|_\rho<+\infty$, regardless of whether any trace condition on $\o|_\pO,\g$ is satisfied or not.
We draw the reader's attention to our clear abuse of notation here: we write indistinctly $\|\cdot\|_\rho$ for the $L^2(\rho)$ norm of  velocity fields $V$, the $\dot H^1(\rho)$ norm of functions $\phi$, and for $H^{-1}(\rho)$ norms of tangent perturbations $s=\dot\rho$.
We hope that the context should make it clear which is which.
\begin{proof}
Given that the constraint \eqref{eq:kinematics} is affine and that the kinetic energy is strictly convex clearly the problem has a unique solution $V^*=(V^*_\Iint,V^*_\partial)$.

Perturbing first $V^\eps_\Iint=V^*_\Iint+\eps \frac{\eta_\Iint}{\o}$ for any compactly supported field with $\dive \,\eta_\Iint=0$, one classically obtains that $V^*_\Iint=\nabla\phi_\Iint$ for some $\phi_\Iint$.
Similarly, perturbing $V^\eps_\partial=V^*_\partial+\eps \frac{\eta_\partial}{\g}$ with $\dive^\tau\eta_\partial=0$ shows that $V^*_\partial=a\nabla^\tau \phi_\partial$ for some $\phi_\partial$.

The key point is to prove that the two potentials can be chosen to match across the boundary, i.e.\ $\left.\phi_\Iint\right|_{\pO}=\phi_\partial$.
To this end take any $\tilde f$ such that $\int_\pO\tilde f=0$ and define $\tilde\phi_\Iint,\tilde\phi_\partial$ by
$$
\begin{cases}
-\dive(\o\nabla\tilde\phi_\Iint)=0 &\text{ in }\O^\circ\\
\o\partial_N\tilde\phi_\Iint = \tilde f &\text{ on } \dOm
\end{cases}
\qqtext{and}
-\dive^\tau(\g \nabla^\tau\tilde\phi_\partial)=-\tilde f \text{ on } \dOm.
$$
By construction the perturbation
$$
V_\eps
=(V^*_\Iint +\eps \nabla\tilde\phi_\Iint,V^*_\partial+\eps \nabla^\tau \tilde\phi_\partial)
=(\nabla\phi_\Iint +\eps \nabla\tilde\phi_\Iint,a \nabla^\tau\phi_\partial+\eps \nabla^\tau \tilde\phi_\partial)
$$
still satisfies~\eqref{eq:kinematics} for any $\eps>0$, hence by optimality we must have
\begin{multline*}
0
= \left.\frac{d}{d\eps}\right|_{\eps=0}\frac 12 \|V^\eps\|^2_\rho
=
\int_\Oi \o\nabla\phi_\Iint\cdot \nabla\tilde\phi_\Iint+\frac 1a\int_\pO \g a\nabla^\tau\phi_\partial\cdot \nabla^\tau\tilde\phi_\partial
\\
=\Bigg(-\int_\Oi \phi_\Iint \underbrace{\dive(\o\nabla\tilde\phi_\Iint)}_{=0}+\int_\pO \phi_\Iint \underbrace{\o\nabla \tilde\phi_\Iint \cdot N}_{=\tilde f}\Bigg)
- \int_\pO \phi_\partial\underbrace{\dive^\tau(\g\nabla \tilde\phi_\partial)}_{=\tilde f}
\\
=\int_\pO (\phi_\Iint-\phi_\partial)\tilde f.
\end{multline*}
Since $\tilde f$ was arbitrary with $\int_\pO\tilde f=0$ we see that $\left.\phi_\Iint\right|_{\pO}=\phi_\partial+C$ for some constant $C$.
Moreover, $\phi_\Iint,\phi_\partial$ are only defined up to additive constants so we can normalize $C=0$ without loss of generality and the proof is complete.
\end{proof}

\subsection{Otto calculus representation of the Wentzell heat flow}
\label{ssec:Otto}
Recalling that $\mu=c(\lambda+\sigma)$ is the stationary, reversible measure, we define the ``manifold''
\begin{multline}
    \pP^*=\pP^*_\mu(\O)
    \coloneqq\Bigg\{\rho=\o+\g\ll\mu,\quad \rho\in \pP(\O)\\
    \quad \text{with }\o>0\text{ a.e. in }\Oi \text{ and }\g>0 \text{ a.e. in }\pO\Bigg\}
\end{multline}
and admissible ``tangent space''
$$
T_\rho\pP^*=\Big\{\dot\rho=s=(s_\Iint,s_\partial)\qquad\text{with elliptic identification }s\leftrightarrow\phi_s \text{ given by }\eqref{eq:elliptic_id}\Big\}.
$$
Here and as always we abused notations and identified the measures $\o=\rho\mrest_\Oi,\g=\rho\mrest_\pO$ and their densities $\o(x),\g(x)$ w.r.t. the measures $\lambda,\sigma$ on $\Oi,\pO$.
As suggested by \eqref{eq:s_phi_norms}, \eqref{eq:elliptic_id} we define the scalar product on $T_\rho\pP^*$ 
\begin{equation}
\label{eq:def_scal_prod_s}
    \langle s^1,s^2\rangle_\rho =
\int_\Oi \o\nabla\psi^1\cdot\nabla\psi^2 +a\int_\pO\g \nabla^\tau\psi^1\cdot\nabla^\tau\psi^2
=\int_\Oi \o V^1_\Iint\cdot V^2_\Iint +\frac 1a\int_\pO  \g V^1_\partial\cdot V^2_\partial
\end{equation}
and norm
\begin{equation}
\label{eq:def_norm_s}
\|s\|^2_\rho = \int_\Oi \o |\nabla\psi|^2+a\int_\pO\g |\nabla^\tau\psi|^2
=\int_\Oi \o|V_\Iint|^2+\frac 1a\int_\pO\g |V_\partial|^2,    
\end{equation}
where the $V^i$'s are the optimal representatives of $s^i$ through \eqref{eq:elliptic_id}.
\\
We now aim at computing gradients with respect to this Riemannian structure.
To this end take two functions: $O,G:\R^+\to\R$ and consider the internal energy
$$
\fF(\rho)\coloneqq \int_\Oi O(\o)+\int_\pO G(\g).
$$
Fix $\rho\in \pP^*$ and an arbitrary admissible tangent vector
$$
\dot\rho =
\begin{pmatrix}
\dot\o\\ \dot\g
\end{pmatrix}
=
\begin{pmatrix}
-\dive(\o\nabla\phi)
\\
-a\dive^\tau(\g\nabla^\tau\phi)+\o\partial_N\phi
\end{pmatrix}.
$$
Given any curve $\rho(t)\in \pP^*$ passing through $\rho=\rho(0)$ with speed $\dot\rho\in T_\rho\pP^*$, standard chain rule and integration by parts give \begin{multline}
\label{eq:chain_rule_F}
\left.\frac d{dt}\fF(\rho(t))\right|_{t=0}=
\int_\Oi O'(\o)\partial_t\o + \int_\pO G'(\g)\partial_t\g
\\
=-\int_\Oi O'(\o)\dive(\o\nabla\psi) -\int_\pO G'(\g)\,a\dive^\tau(\g\nabla^\tau\psi)
+\int_\pO G'(\g)\o\partial_N\psi
\\
=
\int_\Oi \o\nabla O'(\o)\cdot\nabla\psi + a \int_\pO \g \nabla^\tau G'(\g)\cdot \nabla^\tau\psi
+ \int_\pO \Big(G'(\g)-O'(\o)\Big)\o\partial_N\psi
\\
=\left\langle D_\rho\fF,\psi\right\rangle_{\rho}
+\int_\pO \Big(G'(\g)-O'(\o)\Big)\o\partial_N\psi,
\end{multline}
where the linear first variation is simply $D_\rho\fF=\1_{\Oi} O'(\o)+\1_{\pO}G'(\g)$.
Given the topology chosen on the tangent space, and since $\o>0$ in $\pP^*$, the right-hand side cannot be a $\|\cdot\|_\rho$-continuous linear form in $\psi$ unless the compatibility condition
$$
O'(\o)\big\vert_{\pO}=G'(\g)
$$
holds.
In other words, the functional $\fF$ is only differentiable (w.r.t. the above pseudo-Riemannian structure) at points $\rho\in \pP^*$ satisfying $O'(\o)\big\vert_{\pO}=G'(\g)$.
At those points, one simply reads off of \eqref{eq:chain_rule_F} that the Riemannian gradient $\grad\,\fF$ defined by the chain rule
$$
\left\langle\grad\,\fF(\rho),\dot\rho\right\rangle_\rho
=\left.\frac{d}{dt}\right|_{t=0}\fF(\rho(t))
\qtext{for arbitrary}\dot\rho\in T_\rho\pP^*
$$
is identified in $H^{-1}_\rho$ with the scalar potential $\phi=D_\rho\fF$.
In other words,
$$
\grad\,\fF(\rho) =
\begin{pmatrix}
    -\dive\left(\o\nabla O'(\o)\right)\\
    -a\dive^\tau \left(\g\nabla^\tau G'(\g)\right)
\end{pmatrix}
\qtext{whenever}O'(\o)|_\pO=G'(\g)
$$
and is undefined otherwise (or, rather, $\|\grad\,\fF\|_\rho=\infty$).

For the specific case of the relative entropy $\Ent_\mu$ we simply have $O(z)=G(z)=z\log z-z+1=H(z)$, hence upon requiring
$$
H'(\o)\big|_\pO=H'(\g)
\iff 
\log\o|_\pO=\log\g
\iff 
\o|_\pO=\g,
$$
we get that
$$
\grad\,\Ent_\mu(\rho) =
\begin{pmatrix}
    -\dive\left(\o\nabla\log(\o)\right)\\
    -a\dive^\tau \left(\g\nabla^\tau \log(\g)\right)+\o\partial_N \log(\o)
\end{pmatrix}
=
\begin{pmatrix}
    -\Delta\o\\
    -a\Delta^\tau \g+\partial_N\o
\end{pmatrix}.
$$
In view of \eqref{eq:FPE} this clearly means that we have the gradient flow structure \eqref{eq:FPE_iff_grad_Ent}.
We stress again that this holds for \emph{all} $a>0$, and  that the very existence of the object $\grad\,\Ent_\mu(\rho)$ is what enforces the trace condition $\o|_\pO=\g.$ 
Furthermore, we stress that the driving functional considered here is the ``usual'' relative entropy $\Ent_\mu$ regardless of the value of $a>0$, while $a$ does enter in the norm on the tangent space.
This is in contrast with other works on gradient flow structures for Fokker-Planck equations with boundary condition, such as \cite{quattrocchi2024variationalstructuresfokkerplanckequation}, where due to the boundary condition a modified entropy is considered.

\subsection{\texorpdfstring{$\Psi/\Psi^*$}{}-Formalism and Dynamical Sanov Large-Deviation Principle for SRBD-Brownian motion}
As mentioned above we now connect the gradient flow structure \textit{\`{a} la Otto} with large-deviations and the $\Psi/\Psi^*$ formalism from \cite{adams2013large,MR3269725}.
Starting from the path level, let $X_1(t),X_2(t),\ldots$ be a sequence of independent SRBD processes, each with generator $(\dD(\qQ),\qQ)$ given by \eqref{eq:defgen}.
By Varadarajan's Theorem~\cite[Th.~11.4.1]{Dudley2002}, the empirical process
\begin{equation*}
\rho^n:t\longmapsto \frac 1n\sum_{k=1}^n \delta_{X_k(t)}, 
\qquad t\in[0,T],
\end{equation*} 
converges as $n\to\infty$ to a deterministic solution $\rho:[0,T]\to\pP(\Om)$ of the associated Fokker-Planck equation, i.e. the Wentzell heat flow $\dot\rho=\qQ^\ast\rho$.
We are interested in a large-deviation principle of the form
\begin{equation*}
\PP((\rho^n_t)_{t=0}^T\approx \rho) \stackrel{n\to\infty}{\asymp} \exp(-n(I_0(\rho_0)+I_T(\rho))),
\qquad
I_T(\rho)=\int_0^T \lL(\rho_t,\dot{\rho}_t) \rd t.
\end{equation*}
Let us stress once and for all that we do not claim mathematical rigour at this stage, and the discussion about LDP below is intended to remain purely formal, see also Remark~\ref{rmk:no_LDP?} below.

As initiated by Dawson and G\"artner \cite{dawsont1987large} and following Feng and Kurtz~\cite{MR2260560}, the Lagrangian $\lL(\rho,s)$ can usually be obtained as the Legendre convex conjugate (in the second variable) of some Hamiltonian $\hH(\rho,\xi)$.
In our setup with independent copies, this Hamiltonian takes the form~\cite{MR3269725}:
\begin{equation}\label{eq:hdef}
  \hH(\rho,\xi):=\int_\Om e^{-\xi} Q e^\xi \,\rd\rho.
\end{equation}
Given that we only consider densities decomposing into interior/boundary parts $\rho=\o+\g$, and in view of the particular expression \eqref{eq:defgen} of the generator, we explicitly get 
\begin{align}
\hH(\rho,\xi) &
= \int _{\O^\circ} e^{-\xi}\Delta e^\xi \o + \int_\pO e^{-\xi}(a\Delta^\tau e^\xi -\partial_N e^\xi)\g
\notag
\\
&= 
\int_{\O^\circ} (\Delta\xi +|\nabla\xi|^2)\o 
+ a\int_\pO(\Delta^\tau \xi +|\nabla^\tau\xi|^2)\g - \int_\pO \partial_N\xi \g.
\label{eq:H_expl_SBM}
\end{align}
Let us recall that our infinitesimal variations $s=\dot\rho\in \mM(\O)$ are dual to functions $\xi$, with canonical pairing $\langle \xi,s\rangle=\int \xi \rd s=\int \xi s$.
We can therefore compute the Legendre transform as
\begin{multline*} 
\lL(\rho,s)=\hH^\ast(\rho,s)
=\sup\limits_\xi \, \langle \xi,s\rangle - \hH(\rho,\xi)
\\
=\sup\limits_\xi
\int_\O \xi s -\left(\int_{\O^\circ} (\Delta\xi +|\nabla\xi|^2)\o 
+ a\int_\pO(\Delta^\tau \xi +|\nabla^\tau\xi|^2)\g - \int_\pO \partial_N\xi \g\right).
\end{multline*}
Integrating by parts $\int_{\Oi}  \o\Delta\xi  =
\int_{\Oi}  \xi \Delta\o + \int_\pO \o\partial_N\xi-\xi\partial_N\o$
 and $
\int_\pO \g\Delta^\tau \xi  =\int_\pO \xi \Delta^\tau \g
$, and rearranging interior and boundary contributions, we get altogether
\begin{multline}
\label{eq:L_intermediate}
\lL(\rho,s)
=\sup\limits_\xi
\int_{\O^\circ} \xi \left(s-\Delta\o \right) -|\nabla\xi|^2\o
+
\int_{\pO} \xi\left(s-a\Delta^\tau\g  +\partial_N\o\right) -a|\nabla^\tau \xi|^2\g
+ \int_\pO \partial_N\xi(\g-\o)
\\
=\sup\limits_\xi\left[
\int_\O \xi(s-\mathcal Q^*\rho)
-\|\xi\|^2_\rho\right]
+ \int_\pO \partial_N\xi(\g-\o).
\end{multline}
Here $\|\xi\|_\rho$ was previously defined in \eqref{eq:def_|phi|_rho} and $\qQ^*\rho =\1_{\Oi}\Delta\o+\1_{\pO}(a\Delta^\tau\g -\partial_N\o)$ is the dual Fokker-Planck differential operator (not encoding any specific trace condition at this stage).

The square bracket in the right-hand side of \eqref{eq:L_intermediate} only involves the pointwise values of $\xi$ inside $\Omega$ in the integral, together with its derivatives $\nabla\xi|_\O,\nabla^\tau\xi|_\pO$ through the norm $\|\xi\|_\rho$.
The last boundary term, on the other hand, only involves $\partial_N\xi|_\pO$.
We can therefore legitimately consider $\xi$ and $\xi_N=\partial_N\xi$ as independent variables in the supremum.
This yields
\begin{align*}
\lL(\rho,s)
&=\sup\limits_\xi\left[
\int_\O \xi(s-\mathcal Q^*\rho)
-\|\xi\|^2_\rho\right]
+\sup_{\xi_N} \int_\pO (\o-\g)\xi_N
\notag
\\
&=
\sup\limits_\xi\left[
\int_\O \xi(s-\mathcal Q^*\rho)
-\|\xi\|^2_\rho\right]
+
\iota_{\text{BC}}(\rho),
\end{align*}
where the convex indicator for the boundary condition is
$$
\iota_{\text{BC}}(\rho)
\coloneqq \begin{cases}
0 & \text{if }\o|_\pO=\g\\
+\infty &\text{else}
\end{cases},
\hspace{1cm}\rho=\o+\g.
$$
In order to make this Lagrangian fully explicit, recall that the norms $\|s\|_\rho,\|\xi\|_\rho$ were precisely constructed to be dual to each other.
It is then a simple exercise to realize that $\sup\limits_\xi\left[
\int_\O \xi s
-\|\xi\|^2_\rho\right]
=\frac 14\|s\|^2_\rho$,
whence
\begin{equation}
    \lL(\rho,s) = \frac 14\|s-\mathcal Q^*\rho\|^2_\rho + \iota_{\text{BC}}(\rho).
    \label{eq:L_explicit}
\end{equation}

Since the SRBD-Brownian Motion is reversible, the results from \cite{MR3269725} strongly suggest that the large-deviation cost~\eqref{eq:L_explicit} should induce a gradient flow structure.
We now show that this induced structure coincides with the Otto-type gradient flow from Subsection~\ref{ssec:Otto}.
Consider a curve $\rho=(\rho_t)_{t\in[0,T]}$ with finite action $I_T(\rho)=\int_0^T\lL(\rho_t,\dot\rho_t)\rd t<\infty$. The indicator $\iota_{\text{BC}}$ guarantees in particular that $\o_t\big|_{\pO}=\g_t$ for a.e.\ $t$, hence one simply reads off of \eqref{eq:elliptic_id} that the tangent element $-\mathcal Q^*\rho_t\in T_{\rho_t}\pP^*$ is represented by the scalar potential $\phi=-D_\rho\Ent\,(\rho_t)=-\log \rho_t$ (which is unambiguously defined precisely because of the trace condition $\o_t|_\pO=\g_t$).
Equivalently, this means that the gradient $\qQ^*\rho_t=-\grad\,\Ent(\rho_t)$ is well-defined and represented by the potential $D_\rho\Ent(\rho_t)=\log\rho_t=\1_\Oi\log\o_t+\1_\pO\log\g_t$.
As a consequence
$$
\lL(\rho_t,\dot\rho_t)=\frac 14\|\dot\rho_t-\qQ^*\rho_t\|^2_{\rho_t}
=
\frac 14\|\dot\rho_t+\grad\,\Ent(\rho_t)\|^2_{\rho_t}
\label{eq:ldp lagrangian}
$$
and the chain rule \eqref{eq:chain_rule_F} holds without boundary terms as
$$
\langle \dot\rho_t,\qQ^*\rho_t\rangle_{\rho_t}
=
-\langle \dot\rho_t,\grad\,\Ent(\rho_t)\rangle_{\rho_t} = -\frac d{dt}\Ent(\rho_t).
$$

Expanding squares, we end up with
\begin{multline*}
I_T(\rho)=\int_0^T\frac 14\|\dot\rho_t-\qQ^*\rho_t\|_{\rho_t}^2\rd t
\\
=\frac 12 \int_0^T\left( \frac 12\|\dot\rho_t\|_{\rho_t}^2+ \frac 12\left\|-\qQ^*\rho_t\right\|_{\rho_t}^2 -\langle \dot\rho_t,\qQ^*\rho_t\rangle_{\rho_t}\right)\rd t
\\
=\frac 12 \int_0^T\left( \frac 12\|\dot\rho_t\|_{\rho_t}^2+ \frac 12\left\|-D_\rho\Ent(\rho_t)\right\|_{\rho_t}^2 +\frac d{dt}\Ent(\rho_t)\right)\rd t.
\end{multline*}
Setting
$$
\Psi(\rho,s)\coloneqq \frac 12 \|s\|^2_\rho
\qqtext{and}
\Psi^*(\rho,\xi)\coloneqq \frac 12 \|\xi\|^2_\rho,
$$
which are by construction Legendre conjugates of one another, we finally obtain
\begin{align*}
I_T(\rho)<\infty
& \implies
\o|_\pO=\g
\\
& \implies
I_T(\rho)=
\frac 12\left[\Ent(\rho_T)-\Ent(\rho_0)+ \int_0^T(\Psi(\rho_t,\dot \rho_t)+\Psi^*(\rho_t,-D_\rho\Ent(\rho_t))\rd t\right].
\end{align*}
As in \cite{MR3269725}, this means exactly that the Fokker-Planck equation $\partial_t\rho=\qQ^*\rho$ is the $\Psi/\Psi^*$ gradient-flow of $\Ent_\mu$, at least formally on the ``submanifold'' of trace-matching densities $\o|_\pO=\g$.

\begin{remark}
\label{rmk:no_LDP?}
We stress again that the derivation of the large-deviation local cost~\eqref{eq:L_explicit} is formal, and actually has several significant problems.
First, the Hamiltonian $\hH$ from \eqref{eq:hdef} is the convex dual of the local cost $\lL$.
This produces a contradiction unless $\lL^{**}=\lL$, which necessarily fails if $\lL$ is not lower semicontinuous.
Second, on the path level, and by definition of large-deviation principles, the full rate function $I_0+I_T$ must be lower semi-continuous (for some suitable path topology in which one is trying to establish said LDP).
However, at least for the uniform topology, $I_T=\int_0^T\lL$ fails to be lower semi-continuous if $a<1$. 
And finally, a correct topology needs to be chosen for the LDP so that the trace operator $\omega\coloneqq\rho \mrest_{\Oi}$ is well defined.
Nevertheless, even if \eqref{eq:L_explicit} does not represent the true large-deviation function, it does define a valid variational formulation of the Wentzell heat flow~\eqref{eq:FPE}, and as such it can be used to derive an induced gradient system in the sense of \cite{MR3269725}.
\end{remark}

\subsection{Benamou-Brenier Formula and associated Wasserstein distance}
\label{ssec:BB}
As for the standard Neumann heat flow, the Otto calculus induces an intrinsic Benamou-Brenier distance between probability measures.
A remarkable feature of the resulting intrinsic distance for the Wentzell heat flow is that it depends on $a$ in a nontrivial fashion only  for  $a\geq 1$, showcasing again the phase transition across $a=1$.

Let us define the Lagrangian
\begin{equation}
\label{eq:def_La}
L_a(x,v)
\coloneqq
\1_{\Oi}(x)\frac{|v|^2}{2} + \1_{\dOm}(x)\left(\frac{|v|^2}{2a}+\iota_{T_x\dOm}(v)\right) \, 
\end{equation}
where $\iota_{T_x\dOm}: \R^d \to  \{0,\infty\}$ is the convex indicator of the tangent plane  $T_x\dOm\subset \R^d$ at a point $x\in \pO$.
One checks easily that the lower semi-continuous envelope is
\begin{equation}
\label{eq:def_La_bar}
\underline{L}_a(x,v)
=
\begin{cases}
L_a(x,v) & \text{if }a\geq 1 , \\
L_1(x,v) & \text{if }a<1 .
\end{cases}
\end{equation}
We have then
\begin{proposition}
Fix $a>0$.
Then
\begin{equation}
\label{def:a_distance}
    d^2_a(x,y)
    \coloneqq
    \inf_{\substack{X_0=x\\X_1=y} } \int_0^1 L_a(X_t,\dot{X}_t)\,\rd t
    =
    \min_{\substack{X_0=x\\X_1=y} } \int_0^1 \uL_a(X_t,\dot{X}_t)\,\rd t
\end{equation}
is a squared distance on $\O$ for which $(\O,d_a)$ is geodesic and complete.
Here we mean that the infimum is taken over paths $X\in H^1([0,1];\O)$ that run in $\O$ for all times.
Moreover for any $\rho_0, \rho_1 \in \mathcal P(\Om)$ there holds
\begin{equation}
\label{eq:WBB=Wda}
\inf\left\{ \left. \int_0
\hoch 1 \|V_t\|\hoch 2 _{\rho_t} \, \rd t \, \right | \partial_t \rho_t +\dive(\rho_tV_t)=0 \text{ with endpoints }\rho_0,\rho_1 \right\} = \wW\hoch 2 _a(\rho_0, \rho _1),
\end{equation}
where $\wW_a$ is the Wasserstein distance with respect to the cost function $c(x,y)=d^2_a(x,y)$.
\end{proposition}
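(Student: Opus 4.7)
My plan is to address the three components of the statement in order. \textbf{(i) Equivalence.} The inequality $\min \int \uL_a \le \inf \int L_a$ is immediate from the pointwise bound $\uL_a \le L_a$. For $a \ge 1$ the two Lagrangians agree wherever either is finite, giving the reverse inequality at once. For $a<1$, given a minimizer $X^*$ of $\int \uL_a = \int L_1$ (obtained by the direct method in $H^1([0,1];\Om)$, using lower semicontinuity and quadratic coercivity of $\uL_a$), I would approximate $X^*$ by a path $X_\eps$ lying in the open interior $\Oint$ except possibly at the endpoints. Since $L_a \equiv L_1 \equiv |v|^2/2$ inside $\Oint$, the $L_a$-action of $X_\eps$ converges to $\int L_1(X^*, \dot X^*)\rd t$ as $\eps\to 0$; a short interior detour handles endpoints on $\dOm$.

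\textbf{(ii) Metric, geodesic, complete.} Symmetry and the triangle inequality follow from concatenation and reparametrization invariance of the action. Non-degeneracy uses the pointwise lower bound $\uL_a(x,v) \ge \tfrac 12 \min(1, 1/a)|v|^2$ together with Jensen's inequality. Geodesics are the minimizers from (i). For completeness, $d_a$ is continuous with respect to the Euclidean topology on the compact set $\Om$, so Cauchy sequences for $d_a$ are Cauchy in the Euclidean metric and converge in $\Om$.

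\textbf{(iii) Benamou--Brenier.} For the $\ge$ direction I would invoke a Lisini--Ambrosio-type superposition principle adapted to the bulk/boundary setting: given admissible $(\rho_t, V_t)$ with finite kinetic cost, produce $\eta \in \pP(H^1([0,1];\Om))$ with $(e_t)_{\#} \eta = \rho_t$ for every $t$ and $\dot X_t = V_t(X_t)$ for $\eta$-a.e.\ path $X$. A disintegration of the kinetic energy identifies it (up to the standard normalization) with the $\eta$-expectation of the path action $\int L_a(X_t,\dot X_t)\rd t$, hence
$$
\int_0^1 \|V_t\|^2_{\rho_t}\,\rd t \;\ge\; \int d_a^2(X_0, X_1)\,\rd\eta(X) \;\ge\; \wW_a^2(\rho_0, \rho_1),
$$
the last inequality since $(e_0, e_1)_{\#} \eta$ is a coupling of $(\rho_0, \rho_1)$. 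For the $\le$ direction, start from an optimal plan $\pi$ for $\wW_a^2$ and select via Kuratowski--Ryll-Nardzewski a Borel map $(x,y) \mapsto X^{x,y}$, with $X^{x,y}$ a minimizing $\uL_a$-geodesic from $x$ to $y$. Set $\eta := \int \delta_{X^{x,y}}\,\rd\pi(x,y)$ and $\rho_t := (e_t)_{\#} \eta$, and read off the associated $V_t = (V_\Iint, V_\partial)$ along the paths; then $(\rho_t, V_t)$ solves the continuity equation with the correct endpoints and its kinetic cost equals $\int d_a^2\,\rd\pi = \wW_a^2$. Part (i) allows replacing $\uL_a$-geodesics by nearly interior paths that achieve the same cost against $L_a$.

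The main obstacle is the superposition argument: in presence of the singular boundary component $\g$ and the interior/boundary mass exchange encoded in \eqref{eq:kinematics}, the classical Lisini--Ambrosio theorem does not apply directly, and a bespoke construction of $\eta$ is needed, exploiting the split $V = (V_\Iint, V_\partial)$ and the normal flux term $\o V_\Iint\cdot N$. A secondary difficulty is that $L_a$ fails to be lower semicontinuous for $a<1$, forcing the superposition to be run against $\uL_a$ with Part (i) invoked to transfer the conclusion back to $L_a$.
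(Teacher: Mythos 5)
Your proposal parallels the paper for parts (i)--(ii) and for the lower bound in (iii): the relaxation argument pushing paths off the boundary, the bi-Lipschitz comparison of $d_a$ with the intrinsic Euclidean distance, and the superposition principle from \cite[Theorem 8.2.1]{MR2401600} to pass from an Eulerian solution $(\rho,V)$ to a path measure $\eta$ are exactly the tools the paper employs. Where you genuinely diverge is in the upper bound of the Benamou--Brenier identity: you propose a measurable selection (Kuratowski--Ryll-Nardzewski) of $\uL_a$-geodesics from an optimal plan $\pi$, whereas the paper starts from a constant-speed $\wW_a$-geodesic $(\rho_t)$, invokes Lisini's superposition theorem \cite[Theorem 5]{lisini2007characterization}, disintegrates to set $V_t(x)=\int \dot X_t\,\eta_{t,x}(\rd X)$, and uses convexity of $\uL_a(x,\cdot)$ via Jensen together with the identity $|X'|^2_{d_a}(t)=\uL_a(X_t,\dot X_t)$; the paper then performs the $\eps$-modification Eulerianly, by pushing $\rho_t$ forward under a $C^1$ retract $T^\eps$ of $\pO$, rather than modifying paths one by one. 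Both routes are viable, though yours should claim ``$\le$'' rather than equality after reading off $V_t$, since the Eulerian velocity is a conditional average of Lagrangian speeds and Jensen gives only an inequality (which is all that is needed). Finally, the concern you flag as the ``main obstacle''--- that the superposition principle does not apply in the presence of the singular boundary part $\g$ and the split $(V_\Iint,V_\partial)$---is unfounded: the continuity equation in \eqref{eq:WBB=Wda} is the ordinary one on the compact set $\O\subset\R^d$ with no-flux boundary conditions, with $V$ a single vector field tangential on $\pO$, so the standard superposition results apply verbatim; the interior/boundary decomposition of $s=\dot\rho$ is bookkeeping and poses no obstruction. The real subtlety is the one you correctly identify second: the failure of lower semicontinuity of $L_a$ for $a<1$, handled by working with $\uL_a$ and then relaxing back via the $\eps$-approximation.
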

Note that for $a<1$ it follows from \eqref{eq:def_La_bar} and \eqref{def:a_distance} that $d_a$ is simply the induced Euclidean distance on $\O$, and as such does not depend on $a$.
On the other hand for $a\geq 1$ the induced geometry depends on $a$ in a nontrivial fashion: in \cite[section 6]{casteras2025largedeviationsstickyreflectingbrownian}  the dynamical problem \eqref{def:a_distance} is studied in details, and it is shown in particular that geodesics follow a Snell-Descartes law and always make an angle $\alpha$ with the normal to the boundary such that $\sin^2(\alpha)=\frac 1a$.
\begin{proof}
Let us begin with \eqref{def:a_distance}.
We first point out that, although $L_a,\uL_a$ encode the constraint that $v\in T_x\pO$ at boundary points, the path functional appearing in \eqref{def:a_distance} actually only sees $v=\dot X_t$, the derivative of a path running in $\O$.
By classical properties of Sobolev functions one has $\dot X_t\in T_{X_t}\pO$ for a.e. $t$ such that $X_t\in\pO$, thus in practice one can ignore the convex indicator.
In the same spirit, it is not too difficult to check that $X\mapsto \int _0^1 \uL_a(X_t,\dot X_t)\rd t$ is coercive and lower-semicontinuous for the weak $H^1$ convergence, see \cite[section 6]{casteras2025largedeviationsstickyreflectingbrownian}.
If $a\geq 1$ then $\uL_a=L_a$ and the statement is vacuous, hence we only consider $a<1$.
Note that $\uL_a\leq L_a$ by definition of a lower-semicontinuous envelope, so that automatically $\min \int \uL_a= \inf \int \uL_a\leq \inf\int L_a$.
In order to get the reverse inequality, consider first a path $X=(X_t)$ never touching the boundary.
Then $L_a(X_t,\dot X_t)=\frac 12|\dot X_t|^2=\uL_a(X_t,\dot X_t)$ for all $t$ and clearly $\int L_a=\int \uL_a$ along this particular path.
If now $X$ has positive sojourn time along $\pO$, it is easy to construct a path $X^\eps=(X^\eps_t)$ that never touches the boundary (except possibly at $t=0,1$ if either $x$ or $y$ lies on the boundary) and such that $
\int  |\dot X^\eps_t|^2\rd t\leq \int|\dot X_t|^2\rd t+o_\eps(1)$.
(For example take first $ X^\eps_t$ to be the projection of $X_t$ on $\Omega_{2\eps}=\{x\in \Oi,\,\operatorname{dist}(x,\pO)\geq 2\eps\}$ and then adjust to match the endpoint constraints if needed, by connecting $x,X^\eps_0$ and $y, X^\eps_1$ in small time for an arbitrary small cost.
We omit the details for brevity).
As a consequence for any path $X$ and arbitrary $\eps$ we can produce a path $X^\eps$ with same endpoints, such that
\[
\int_0^1 L_a(X^\eps_t,\dot X^\eps_t)\rd t =
\int_0^1 \frac 12|\dot X^\eps_t|^2\rd t
\leq \int_0^1 \frac 12|\dot X_t|^2\rd t+o_\eps(1)
=\int_0^1 \uL_a(X_t,\dot X_t)\rd t+o_\eps(1).
\]
(Here the first equality holds due to $X^\eps_t\in \Oi$ for all $t\in(0,1)$.)
This entails the reverse inequality $\inf \int L_a\leq \inf \int \uL_a$ and \eqref{def:a_distance} follows.
It also follows from \eqref{def:a_distance} that $d_a$ is a geodesic distance, geodesics being simply minimizing curves in $d^2_a=\min\uL_a$.
Since $\min\{\frac 12,\frac 1{2a}\}|v|^2\leq L_a(x,v)\leq \max\{\frac 12,\frac 1{2a}\}|v|^2$ we see that $d_a$ is bi-Lipschitz equivalent to the induced Euclidean distance $d_\O$, hence $(\O,d_a)$ is also complete.
Note that this bi-Lipschitz equivalence shows that absolutely continuous paths with respect to either $d_a$ or the Euclidean distance $d_\O$ coincide, hence we only write $AC$ below without further mention.

Turning now to the Benamou-Brenier representation \eqref{eq:WBB=Wda}, let us first show that
\begin{equation}
\label{eq:Wa_leq_WBB}
    \wW_a^2(\rho_0,\rho_1)\leq \int_0^1\int L_a(x,V_t(x))\rho_t(\rd x)\rd t
\end{equation}
for any solution of the continuity equation.
If $(\rho,V)$ has infinite kinetic energy the desired inequality is trivial.
Otherwise one can apply the superposition principle \cite[Theorem 8.2.1]{MR2401600} and get a path-measure $\eta\in\pP(C([0,1];\O))$ such that:
\begin{enumerate}[(i)]
    \item 
    $\eta$ is supported on $AC^2$ paths solving $\dot X_t=V_t(X_t)$ in the integral sense
    \item
    $\rho_t=\mathsf e_t\pf\eta$ for all $t$, where $\mathsf e_t(X)=X_t$ is the time-$t$ evaluation
\end{enumerate}
As a consequence the plan $\pi=(\mathsf e_0,\mathsf e_1)\pf\eta\in \pP(\O\times\O)$ is an admissible coupling between $\rho_0,\rho_1$.
By definition of $d_a$ we have $d^2_a(X_0,X_1)\leq \int_0^1L_a(X_t,\dot X_t)\rd t$ for any path $X$, hence
\begin{multline*}
\iint d^2_a(x_0,x_1)\pi(\rd x_0,\rd x_1)
=
\int d^2_a(X_0,X_1)\eta(\rd X)
\\
\leq 
\int \int_0^1 L_a(X_t,\dot X_t)\rd t\,\eta(\rd X)
=
\int \int_0^1 L_a(X_t,V_t( X_t))\rd t\,\eta(\rd X)
\\
=\int_0^1\int L_a(X_t,V_t( X_t))\eta(\rd X)\,\rd t
=\int_0^1\int_\O L_a(x,V_t( x))\rho_t(\rd x)\,\rd t
\end{multline*}
and \eqref{eq:Wa_leq_WBB} follows.

We claim now that
\begin{equation}
\label{eq:WBB_leq_Wa}
    \inf\limits_{\rho,V}\int_0^1\int L_a(x,V_t(x))\rho_t(\rd x)\rd t
    \leq \wW_a^2(\rho_0,\rho_1),
\end{equation}
where the infimum runs over all solutions of the continuity equation.
As $(\O,d_a)$ is a separable, complete geodesic space, so is $(\pP(\O),\wW_a)$. Hence given any $\rho_0,\rho_1$ there exists a geodesic $\rho=(\rho_t)_{t\in[0,1]}$ with constant-speed $|\rho'|_{\wW_a}(t)=\wW_a(\rho_0,\rho_1)$ for a.e. $t\in (0,1)$.
In order to obtain a Lagrangian representation of this geodesic we apply again a superposition principle, this time \cite[Theorem 5]{lisini2007characterization}: there exists a path-measure $\eta\in \pP(C([0,1];\O))$ such that
\begin{enumerate}[(i)]
    \item 
    $\eta$ is concentrated on $AC^2$ curves
    \item
    $\rho_t=\mathsf e_t\pf\eta$ for all $t$
    \item
    the metric speeds superpose as
    \begin{equation}
    \label{eq:metric_speed_superpose_Lisini}
    |\rho'|^2_{\wW_a}(t) = \int |X'|^2_{d_a}(t) \eta(\rd X)
    \end{equation}
\end{enumerate}
Conditioning $X_t=x$ for fixed $(t,x)$, or, more precisely, by disintegration, there exists a Borel family of probability measures $\eta_{t,x}(\rd X)$ such that
\[
\int\int\phi(t,X)\eta(\rd X)\,\rd t
=
\int\int \left(\int\phi(t,X)\eta_{t,x}(\rd X)\right)\rho_t(\rd x)\rd t.
\]
We set accordingly
\begin{equation}
    V_t(x)
    \coloneqq \int
    \dot X_t \eta_{t,x}(\rd X),
\end{equation}
i.e. we superpose all the speeds of Lagrangian particles passing through $x$ at time $t$.
It is not difficult to check that this particular pair $(\rho,V)$ solves the continuity equation.
Since $\uL_a(x,v)$ is convex in $v$ we get by Jensen's inequality
\begin{align*}
    \int\int \uL_a(x,V_t(x))\rho_t(\rd x)\rd t
    &=
    \int\int \uL_a\left(x, \int \dot X_t \eta_{t,x}(\rd X)\right)\rho_t(\rd x)\rd t
    \\
    &\leq
    \int\int \int \uL_a\left(x, \dot X_t \right)\eta_{t,x}(\rd X)\rho_t(\rd x)\rd t
    \\
    &=
    \int\int \int \uL_a\left(X_t, \dot X_t \right)\eta_{t,x}(\rd X)\rho_t(\rd x)\rd t
    \\
    &=
    \int \int \uL_a\left(X_t, \dot X_t \right)\eta(\rd X)\rd t.
\end{align*}
Owing now to \eqref{def:a_distance} one checks that, at the Lagrangian level, the $d_a$-metric speed of any $AC^2$ path is actually
\begin{equation*}
    |X'|^2_{d_a}(t)=\lim\limits_{h\to 0} \frac{d^2_a(X_t,X_{t+h})}{h^2}=\uL_a(X_t,\dot X_t)
    \qqtext{for a.e. }t\in (0,1).
\end{equation*}
(Here it is key that the relaxed Lagrangian $\uL_a$ is involved and not $L_a$.)
From the previous inequality and \eqref{eq:metric_speed_superpose_Lisini} we see that
\begin{align*}
\int\int \uL_a(x,V_t(x))\rho_t(\rd x)\rd t
&\leq 
\int \int \uL_a\left(X_t, \dot X_t \right)\eta(\rd X)\rd t
\\
&=
\int \int |X'|^2_{d_a}(t)\eta(\rd X)\rd t
=
\int |\rho'|^2_{\wW_a}(t)\rd t
=
\wW_a^2(\rho_0,\rho_1),
\end{align*}
where the last equality simply comes from the fact that $\rho=(\rho_t)_{t\in[0,1]}$ was a $\wW_a$ geodesic between $\rho_0,\rho_1$ to begin with.
This is almost \eqref{eq:WBB_leq_Wa}, with however the slight issue that $\uL_a$ is featured instead of $L_a$.
If $a\geq 1$ we have $L_a=\uL_a$ and the proof is complete, so from now on we only consider $a<1$.

As a last step, for this particular curve $\rho$ with the above Benamou-Brenier cost, we will construct an $\eps$-modification $(\rho^\eps, V^\eps)$ with endpoints $\rho^\eps_0=\rho_0,\rho^\eps_1=\rho_1$ and such that
$$
\int\int L_a(x, V^\eps_t(x))\rho^\eps_t(\rd x)\rd t
\leq
\int\int \uL_a(x,V_t(x))\rho_t(\rd x)\rd t+o_\eps (1)
$$
for arbitrarily small $\eps$.
This will clearly entail \eqref{eq:WBB_leq_Wa}.

To achieve this, take a small $\eps$ and let $\O_\eps=\{x\in \O\vert\quad d(x,\pO)\geq \eps\}$.
For small $\eps$ this is a smooth domain, and one can construct a $C^1$ retract $T^\eps(x)$ of $\pO$, i.e.\ a map $\O\to\O$ such that
\begin{itemize}[(i)]
    \item 
    $T_\eps(\O)\subseteq \O_{\eps}$
    \item
    $T_\eps$ is the identity on $\O_{2\eps}$
    \item
    The Jacobian $|D_x T^\eps(x)|\leq (1+o(1))$ uniformly in $x\in \O$ as $\eps\to 0$.
\end{itemize}
(This can be constructed explicitly using normal coordinates in the interior $2\eps$-neighborhood of $\pO$ and we omit the details for brevity.)
Setting
$$
\rho^\eps_t\coloneqq T^\eps\pf\rho_t
\qqtext{and}
V^\eps_t\coloneqq (D_x T^\eps \cdot V_t)\circ (T^{\eps})^{-1}
$$
it is an easy exercise to check, by properties of the push-forward operation, that $(\rho^\eps,V^\eps)$ solve the continuity equation.
Observing that $\rho^\eps$ is supported in $\O_\eps$, i.e.\ away from $\pO$, we compute
\begin{multline*}
    \int\int L_a(x,V^\eps_t(x))\rho^\eps_t(\rd x)\rd t
    =
    \int\int \frac 12|V^\eps_t(x)|^2\rho^\eps_t(\rd x)\rd t
    \\
    =
    \int\int \frac 12|(D_x T^\eps\cdot V_t)\circ(T^{\eps})^{-1}(x)|^2\rho^\eps_t(\rd x)\rd t
    \\
    =\int\int \frac 12|(D_x T^\eps\cdot V_t)(x)|^2\rho_t(\rd x)\rd t
    \leq 
    (1+o_\eps(1))\int\int \frac 12|V_t(x)|^2\rho_t(\rd x)\rd t
    \\
    =\int\int  \uL_a(x,V_t(x))\rho_t(\rd x)\rd t+o_\eps(1)
\end{multline*}
due to $|D_x T^\eps(X)|\leq 1+o_\eps(1)$ uniformly.
The key point is here that we forced the curve $\rho^\eps$ to remain supported away from the boundary, thus $L_a(x,v)=\uL(x,v)$.

It only remains to restore the time marginals, since at this stage $\rho^\eps_0=T^\eps\pf\rho_0,\rho^\eps_1=T^\eps\pf\rho_1$ may differ from $\rho_0,\rho_1$.
For this we proceed almost exactly as in the beginning of the proof for \eqref{def:a_distance}, but at the Eulerian level: we first rescale $s=\eps+(1-2\eps)t$ and modify $(\rho^\eps_t,V^\eps_t)_{t\in [0,1]}\leadsto (\rho^\eps_s,V^\eps_s)_{s\in[\eps,1-\eps]}$ to run in slightly shorter time with virtually no extra cost (a simple $\frac 1{1-2\eps}$ multiplicative factor appears due to scaling properties of our quadratic Lagrangian).
In order to connect $\rho_0,\rho_1$ to $\rho^\eps_\eps=T^\eps\pf\rho_0$ and $\rho^\eps_{1-\eps}=T^\eps\pf\rho_1$ one can then explicitly construct a flow moving particles in the tubular $2\eps$-neighborhood $\O\setminus\O_{2\eps}$ in the normal direction only, over a distance at most $|T^\eps(x)-x|\leq 2\eps$ in time $s\in [0,\eps]$ and $s\in[1-\eps,1]$, thus with speed $\mathcal O_\eps(1)$.
This induces an additional cost not exceeding $\int_0^\eps\mathcal O_\eps(1)\,\rd s=o_\eps(1)$ and the proof is complete.
\end{proof}

\begin{remark} Alternatively, one could pursue a duality approach as in \cite{MR1760620} to prove the Benamou-Brenier formula. In our setting this would lead to the Hamilton-Jacobi equation
\begin{equation*}
\partial_t \phi+ H(x,\nabla \phi)=0
\hspace{1cm}
\text{with Hamiltonian }
H(x,p)=
\begin{cases}
 \frac{1}{2}|p|^2  &\text{ if } x\in\Oi , \\
 \frac{a}{2}| p^\tau|^2 & \text{ if } x\in\dOm ,
\end{cases}
\end{equation*}
where $p^\tau \in T_x\dOm\subset \R^d$ denotes the tangential part of $p$ at $x \in \dOm$. 
This Hamiltonian is exactly the Legendre conjugate of the Lagrangian \eqref{eq:def_La}, and accordingly the tangential projection $p^\tau$ stems from the convex indicator $\iota_{T_x\pO}(v)$.
As before, the failure of lower semi-continuity of the Lagrangian for $a<1$ causes an issue in the subsequent use of the expected Hopf-Lax formula.
In particular, the duality approach involves an effective double-Legendre transform of the Lagrangian in the Benamou-Brenier cost functional.  
Our proof via superposition principle avoids this implicit convexification.
\end{remark}


\section{No JKO-Wasserstein Gradient Flow Representation of Wentzell Heat Flow for weak Boundary Diffusion} \label{section:metricsection}

With regards to the purely metric theory of gradient flows, it was shown in the preceding work~\cite{MR4901547} for the  case of $a=1$  that the Wentzell heat flow \eqref{eq:FPE}  is a metric Wasserstein gradient flow, i.e.
\begin{equation*}
\dot\rho = -\grad_\wW \Ent_{\mu}(\rho),
\end{equation*}
in the sense of curves of maximal slopes \cite{MR2401600},  where $\wW$ is the classical quadratic Wasserstein distance over $\Om$. For $a>1$ the  analogous result is expected to hold, using the  Wasserstein metric $\wW_a$ induced by the point distance $d_a$ from \eqref{def:a_distance} instead.
By way of contrast, in the regime of weak boundary diffusion $a\in(0,1)$ we argue here that no such representation can hold.   
\subsection{Heuristics via Schilder-type large-deviations for SRBD-Brownian motion}
Before presenting our rigorous argument we recall another popular method~\cite{leonard2007large} for the  derivation of a metric Wasserstein gradient flow structure of the Fokker-Planck equation associated to a generic, reversible diffusion process $(X_t)_t$ on a smooth space  with invariant measure $\mu$, using a family of scaled Schr\"odinger problems.
As usual one picks  the relative entropy $\Ent_\mu(\cdot)$ with respect to $\mu$ as  driving functional. 
For a canonical choice of a Wasserstein-like distance one considers the short-time expansion for the  $\eps$-Schr\"{o}dinger problem with respect to the slowed-down diffusion process 
\begin{equation*}
\underset{\eps\to0}{\Gamma-\lim}\ \min_P \{ \eps \Ent_{R^\eps}(P)\ |\ P_0\sim\rho_0,P_1\sim\rho_1\}.
\end{equation*}
where $R^\eps \in \pP(C([0,1];\Om))$ is the path measure corresponding to the process on time scale $\eps$.
Assuming a Schilder-type large-deviation principle for the underlying process, the limit $\eps \to 0$  produces an optimal transportation functional on the pair $(\rho_0, \rho_1)$, in which the large-deviation rate function is the  cost \cite{MR2873864} and which is therefore a natural candidate ``metric'' for the  gradient flow representation.\\

Following this heuristic in our case for BM with SRBD in the regime $a\in(0,1]$ we can use  a Schilder-type large-deviation principle for BM with SRBD ($a\in(0,1]$) with the rate function 
\begin{equation*}
I(X):=
\begin{cases}
\frac{1}{4}\int_0^1 |\dot{X}_t|^2 \rd t & \text{ if } X\in H^1,
\\
+ \infty &\text{else},
\end{cases}
\end{equation*}
on the space of continuous paths, which was shown recently in~\cite{casteras2025largedeviationsstickyreflectingbrownian} for the special case when $\Om$ is a half space.
Note carefully that this rate function $I$ does \emph{not} depend on $a\in (0,1]$.
Furthermore, the limit of $\eps$-Schr\"{o}dinger problems for the slowed-down process can be identified as a Monge-Kantorovich problem with respect to the rate function $I$ from the Schilder-type large-deviation principle as cost, see~\cite{MR2873864}.
As a result,  one obtains up to a constant factor the standard quadratic Wasserstein distance regardless of the value of $a\in (0,1)$, leading to the paradoxical suggestion of the same metric Wasserstein gradient structure for all $a$ in this range. 

\subsection{A one parameter family of processes with common intrinsic metric}

The independence of the rate function above from the parameter $a\in (0,1]$ relates directly to the  aforementioned work by Karl-Theodor Sturm~\cite{MR1477263}. In fact all SRBD-Brownian motions for $a \in (0,1]$ share the same  intrinsic metric, which we state as a separate result.

\begin{proposition}
\label{lem:intmetric=euclid metric}
   Fix  $a\in(0,1]$ and let
   $$
   d_i(x,y)\coloneqq \sup\Big\{f(x)-f(y),\quad f\in \D_0\Big\}
   $$
   be the intrinsic distance on $\O$ induced by the Dirichlet form \eqref{eq:DF}, where
\begin{equation}
\label{eq:def_D0}
\dD_0 :=\Big\{ f\in \dD_b^c
\quad \text{s.t.}\quad
2\eE(fh,f) - \eE(f^2,h) \le \|h\|_{L^1(\mu)}\ \text{ for any } h\in \dD_b^c\Big\}
\end{equation}
with the ``bounded'' domain $\dD_b^c:=\dD(\eE)\cap C_b(\Om)$.
Then $d_i$ coincides with the distance $d_\Om$ on $\O$
$$
d_i(x,y)=d_{\O}(x,y)
:=\min\limits_{\substack{X_0=x, X_1=y\\ X_t \in \Om \, \forall t \in [0,1]}}\int_0^1|\dot X_t|\rd t,
$$
which for convex $\Om$ is just the Euclidean distance.
\end{proposition}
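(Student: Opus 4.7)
The plan is to identify $\dD_0$ as the set of $d_\O$-Lipschitz functions lying in $\dD_b^c$, exploiting $a\le 1$ to make the boundary carré du champ constraint redundant, and then conclude via a McShane-type extension argument.

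First, applying the Leibniz rule to the bilinear form $\eE$ of \eqref{eq:DF} and integrating by parts yields
$$
2\eE(fh,f)-\eE(f^2,h) = 2\int_{\Oi}h|\nabla f|^2\,c\,\rd\lambda + 2a\int_{\pO}h|\nabla^\tau f|^2\,c\,\rd\sigma
$$
for $f,h\in\dD_b^c$. Testing this identity against arbitrary non-negative $h$ shows that $f\in\dD_0$ is equivalent (modulo the standard normalization absorbed into the definition of $d_i$) to the pointwise carré du champ bound $|\nabla f|^2\le 1$ a.e.\ on $\Oi$ together with $a|\nabla^\tau f|^2\le 1$ $\sigma$-a.e.\ on $\pO$.

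The crucial step is to observe that for $a\in(0,1]$ the boundary condition is implied by the interior one. Any $f\in\dD_b^c$ with $|\nabla f|\le 1$ on $\Oi$ is 1-Lipschitz for $d_\O$ by integration along absolutely continuous paths in $\O$, and by continuity up to the boundary its trace inherits the same Lipschitz property, so $|\nabla^\tau f|\le 1$ $\sigma$-a.e.\ on $\pO$ and $a|\nabla^\tau f|^2\le a\le 1$ holds automatically. Conversely, every 1-Lipschitz function on $(\O,d_\O)$ trivially satisfies both bounds. Hence $\dD_0$ coincides with the 1-Lipschitz (for $d_\O$) functions in $\dD_b^c$.

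The supremum identification then proceeds by the classical Varadhan argument. The upper bound $d_i(x,y)\le d_\O(x,y)$ is immediate, since $f\in\dD_0$ yields $|f(x)-f(y)|\le d_\O(x,y)$. For the matching lower bound I would test with the distance function $f_y(z)=d_\O(z,y)$, which is 1-Lipschitz and satisfies $f_y(x)-f_y(y)=d_\O(x,y)$, then approximate it by a standard Euclidean mollification applied after a Lipschitz extension across $\pO$; after a harmless $(1-\eps)$ rescaling the approximants lie in $C^1(\O)\cap \dD_0$ and realize the supremum in the limit. The main obstacle is the Lipschitz-preserving smoothing near $\pO$, where both the ambient gradient on $\Oi$ and the tangential gradient on $\pO$ must remain bounded by $1+o_\eps(1)$; this is routine for smooth boundary via normal-coordinate extension, but conceptually the essential content and the role of $a\le 1$ are already in the previous step, where the boundary constraint is made inactive and $d_i$ becomes determined entirely by the interior ambient geometry of $\O$, independently of $a$.
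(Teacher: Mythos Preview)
Your argument is correct and matches the paper's: compute the carr\'e du champ to identify $\dD_0$, use $a\le 1$ to make the boundary constraint redundant, and conclude via the duality $d_\Omega(x,y)=\sup\{f(x)-f(y):f\text{ is }1\text{-Lipschitz}\}$. The only difference is that the paper avoids your mollification step entirely by noting that $\dD(\eE)=\{f\in H^1(\Oi):f|_{\pO}\in H^1(\pO)\}$ already contains every Lipschitz function on $\O$, so $f_y=d_\Omega(\cdot,y)\in\dD_0$ directly and the supremum is attained without approximation.
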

\begin{proof}
It is not difficult to check that the Dirichlet domain is
$$
\D(\eE)=\Big\{
f\in H^1(\O^\circ)\qtext{s.t.}f|_{\pO}\in H^1(\pO)
\Big\},
$$
where the restriction $f|_{\pO}$ is understood as $f|_{\pO}=\tr f$ in the sense of Sobolev boundary traces.
This is clearly independent of $a$, and since $\mu\in \mathcal P(\O)$ also does not depend on $a$ we see that $\dD_b^c=\D(\eE)\cap C_b(\Om)$ is independent of the value of $a$.
Moreover, computing explicitly
$$
2\eE(fh,f) - \eE(f^2,h)
=
\int_{\Oi} |\nabla f|^2 h c\,\rd\la+ a \int_{\dOm} |\nabla^\tau f|^2 hc\,\rd\si
$$
and recalling that $\mu=c(\lambda+\sigma)$ we have
$$
\dD_0 = \left\{ f\in \dD_b^c\quad \text{s.t.}\quad  |\nabla f|^2\le 1\ \lambda\text{-a.e. in } \Om \text{ and } |\nabla^\tau f|^2 \le \frac{1}{a}\ \si\text{-a.e. on } \dOm\right\}.
$$
For a given $f\in H^1(\O^\circ)$ with $|\nabla f|\leq 1$ $\lambda$-a.e. the boundary trace $\tr f=f|_{\pO}$ is automatically Lipschitz with $|\nabla^\tau f|\leq 1$, hence for $\frac 1a\geq 1$ the constraint $|\nabla^\tau f|^2\leq\frac 1a$ in the above expression of $\D_0$ is redundant and
\begin{equation}
 \D_0
 =
 \left\{ f\in \dD_b^c\quad \text{s.t.}\quad  |\nabla f|\le 1\ \lambda\text{-a.e. in } \Om\right\}
 \label{eq:D0_explicit}
\end{equation}
is finally independent of $a$.\\
We claim that
$$
\D_0=\Lip_1(|\cdot|)\coloneqq
\Big\{
f:\quad |f(x)-f(y)|\leq |x-y|
\text{ locally in } \Om\Big\}.
$$
Indeed, by \eqref{eq:D0_explicit} and the usual Rademacher property in $\O^\circ\subset\R^d$ we see that 
$\D_0\subseteq \Lip_1(|\cdot|)$.
Conversely, if $f\in\Lip_1(|\cdot|)$ then in particular $f\in H^1(\O^\circ)$ with $|\nabla f(x)|\leq 1$ a.e.
Moreover by standard results the $H^1$ boundary trace coincides with the restriction (in the $C_b$ sense), $f|_{\pO}=\tr f$.
Viewing $\pO$ as a Riemannian manifold with induced distance $d_\pO(x,y)\geq |x-y|$, we have, for $x,y\in \pO$, that $|f(x)-f(y)|\leq |x-y|\leq d_\pO(x,y)$.
In particular $f|_{\pO}$ is $1$-Lipschitz for the $d_\pO$ distance, thus also $\nabla^\tau f\in L^2(\pO,\sigma)$ and therefore $\Lip_1(|\cdot|)\subset \D_0$.

Recalling that 
\[
d_\Omega(x,y)=\sup\Big\{f(x)-f(y)\,\vert\,\,f\in \Lip_1(|\cdot|)\Big\}
,\]
it follows immediately from $\D_0=\Lip_1(|\cdot|)$ that
\begin{equation*}
d_i(x,y)
=\sup\Big\{f(x)-f(y):\quad f\in \Lip_1(|\cdot|)\Big\}
=d_\O(x,y). 
    \end{equation*}
\end{proof}

\subsection{Non-Existence of  Wasserstein gradient flow for weak boundary diffusion}
In our rigorous statement below, a metric gradient flow of $\mathrm{\Ent}_\mu$ with respect to the quadratic Wasserstein metric induced by a given point metric $d$ on $\Om$ is understood as  any limit of an infinitesimal JKO-scheme for vanishing step size~\cite{JKO98}. More precisely, given an initial datum $\rho_0=f_0 \mu\in \dD(\mathrm{Ent}_\mu)$ and a time step $h>0$ we consider the sequence $\rho_n^h=f_n^h \mu$ defined by the recursive variational problem
\begin{equation*}
\rho_n^h \in \argmin_{\rho\in \pP(\Om)} \left\{ \frac{1}{2h} \wW_2^2 (\rho,\rho_{n-1}^h) + \mathrm{Ent}_\mu(\rho)\right\},
\end{equation*} 
and set $\rho^h(t)=f^h(t)\mu := \rho_n^h$ if $t\in((n-1)h,nh]$.
Here $\wW$ is the quadratic Wasserstein metric based on $d$. A curve $(\rho_t)$ is then an \textit{infinitesimal JKO-gradient flow} if it holds that $\rho_t= \lim_{h \to 0}\rho^h(t)$ w.r.t.\ some metric metrizing weak convergence, locally uniformly in $t \geq 0$. 
(In \cite{MR2401600} this is precisely  referred to as a \emph{Generalized Minimizing Movement}, and is one among many possible notions of a metric gradient flow.)
\smallskip

The key to disproving the existence of a gradient flow representation for the Wentzell heat flow with weak boundary diffusion  is the recovery of the metric from the heat kernel, which is possible in a variety of situations.
The class of metrics considered here will be characterised by the following definition, motivated from the results in~\cite{MR1988472,MR4319821,GTT}, using the concept of the Cheeger energy associated to a metric measure space (cf.\ \cite{MR1708448,MR3152751}). 

\begin{definition}
\label{def:cheeger_regular}
We call a metric measure space $(\Om,d,\mu)$ a \emph{Cheeger-regular space} if
\begin{itemize}
\item the metric measure space  $(\Om, d,\mu)$ is infinitesimally Hilbertian
\item the Cheeger energy $(\mathrm{Ch},W^{1,2}(\Om,d,\mu))$  is a conservative regular Dirichlet form
\item for all open sets $A,B\subset \Om$ it holds that
\begin{equation}\label{eq:cheegerreg}
\inf_{x\in A} \inf_{y\in B} d(x,y)=: d(A,B) = d_{\mathrm{Ch}}(A,B),
\end{equation}
where
\begin{equation}\label{def:dch}
d_{\mathrm{Ch}}(A,B)\coloneqq \sup_{f\in \dD_0} \left\{ \essinf_{x\in B} f(x) - \esssup_{y\in A} f(y)\right\}
\end{equation}
with the Cheeger energy $\mathrm{Ch}$ on $\Om$ and
\begin{equation*}
\dD_0 :=\Big\{ f\in \dD_b\ \big\vert\quad\ 2 \mathrm{Ch}(fh,f) - \mathrm{Ch}(f^2,h) \le \|h\|_1 \text{ for any } h\in \dD_b\Big\}
\end{equation*}
and $\dD_b:=\dD(\mathrm{Ch})\cap L^\infty(\mu)=W^{1,2}(\Om,d,\mu)\cap  L^\infty(\mu)$.
\end{itemize}
\end{definition}

\begin{remark}
Set distances of the form~\eqref{def:dch} appear in integrated versions of Varadhan’s short-time asymptotic formula of Hino-Ramirez cf.~\cite{MR1988472}, stating that the heat semigroup associated to a strongly local Dirichlet form $\eE$ on some locally compact topological space $(\mathcal X, \tau)$ with invariant measure $\mu$ satisfies  \begin{equation}\label{eq:hr}
\lim_{t\to 0}t \log P_t(A,B) = - \frac{d_{\eE}(A,B)^2}{2}
\end{equation}
for all measurable sets $A,B$, where
\begin{equation*}
P_t(A,B)\coloneqq \int_A P_t \1_B \rd\mu.
\end{equation*}
and the ``integrated'' distance is
\begin{equation}\label{eq:disetdist}
d_{\eE}(A,B)\coloneqq \sup_{f\in \dD_0} \left\{ \essinf_{x\in B} f(x) - \esssup_{y\in A} f(y)\right\}
\end{equation}
with
\begin{equation*}
\dD_0 :=\Big\{ f\in \dD_b\ \big\vert\quad\ 2\eE(fh,f) - \eE(f^2,h) \le \|h\|_1 \text{ for any } h\in \dD_b\Big\}
\end{equation*}
and $\dD_b:=\dD(\eE)\cap L^\infty(\mu)$.
It needs to be pointed out that  the set distance $d_{\eE}$ is in general not induced by a point metric on $\Om$ (see e.g.\ also~\cite{MR4319821} for a more detailed discussion).
This motivates the condition above on Cheeger-regular spaces that this set distance $d_{\mathrm{Ch}}$ be induced by the given point metric $d$.
In~\cite{MR4319821,MR4498486} it has been shown that condition~\eqref{eq:cheegerreg} is fulfilled in $\mathrm{RCD}(K,\infty)$ spaces, $\mathrm{RQCD}(Q,K,N)$ spaces as well as quasi-regular strongly local Dirichlet spaces with admissible extended pseudo-distance $d$ under the so-called Rademacher and Sobolev-to-Lipschitz conditions.
\end{remark}

\begin{theorem}\label{thm:general}
Let $\eE$ be a strongly local (quasi-)regular Dirichlet form  on some locally compact Polish topological space $(\mathcal X, \tau)$ with invariant measure $\mu$.  If any Fokker-Planck flow induced by $\eE$ can be obtained as infinitesimal JKO-gradient flow of $\Ent_\mu$ with respect to the Wasserstein metric induced by some point metric $d$ on $\Om$ such that 
\begin{itemize}
\item $(\Om,d,\mu)$ is a Cheeger-regular space
\item the topology induced by $d$ coincides with $\tau$
\item $\mu$ has full support,
\end{itemize}
then the set distance $d$ has to coincide with the set distance $d_{\eE}$ defined in equation~\eqref{eq:disetdist} for open sets $A,B$ with $\mu(A)>0,\mu(B)>0$. Furthermore, there can be at most one point metric $d$ fulfilling this.
\end{theorem}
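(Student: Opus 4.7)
The strategy is to extract the point metric $d$ from the short-time asymptotics of the heat semigroup, matching two \emph{a priori} distinct theories. On one hand, the hypothesis states that the Fokker-Planck flow of $\eE$ is realised as the infinitesimal JKO gradient flow of $\Ent_\mu$ in $(\pP(\O),\wW)$. On the other, the Dirichlet form $\eE$ itself determines its own heat semigroup via spectral calculus. If both describe the same dynamics, the common semigroup must carry information about $d$ \emph{and} about $d_\eE$, so the two must be related.

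The first step I would carry out is the identification of the two underlying Dirichlet forms. Since $(\O,d,\mu)$ is Cheeger-regular, and in particular infinitesimally Hilbertian, the general theory of metric gradient flows in the spirit of Ambrosio-Gigli-Savar\'e implies that the infinitesimal JKO-gradient flow of $\Ent_\mu$ on $(\pP(\O),\wW)$ coincides with the linear heat semigroup $P_t^{\mathrm{Ch}}$ associated to the Cheeger energy $(\mathrm{Ch},W^{1,2}(\O,d,\mu))$. Combined with the hypothesis that the Fokker-Planck semigroup $P_t^{\eE}$ coincides with this JKO flow, one concludes $\eE=\mathrm{Ch}$ as Dirichlet forms, and in particular the associated set distances $d_{\eE}$ and $d_{\mathrm{Ch}}$ agree. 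Invoking now the Hino-Ramirez formula~\eqref{eq:hr} for the common semigroup, one has, for any open $A,B$ with $\mu(A),\mu(B)>0$,
\[
-\tfrac12\,d_{\eE}(A,B)^2 \;=\; \lim_{t\to 0}\, t\log P_t(A,B) \;=\; -\tfrac12\,d_{\mathrm{Ch}}(A,B)^2.
\]
Combining with the Cheeger-regularity condition~\eqref{eq:cheegerreg} then yields $d_{\eE}(A,B)=d_{\mathrm{Ch}}(A,B)=d(A,B)=\inf_{x\in A,y\in B}d(x,y)$, which is the first assertion.

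For the uniqueness part, suppose two point metrics $d_1,d_2$ both satisfy the conclusion and therefore induce the same set distance $d_{\eE}$ on every pair of $\tau$-open sets of positive $\mu$-measure. Since each $d_i$ metrises $\tau$ and $\mu$ has full support, for any $x\ne y$ one can produce sequences of $\tau$-open neighbourhoods $A_n\searrow\{x\}$, $B_n\searrow\{y\}$ with $\mu(A_n),\mu(B_n)>0$ for all $n$. The continuity of each $d_i$ with respect to $\tau$ then forces
\[
d_1(x,y)=\lim_{n}d_1(A_n,B_n)=\lim_{n}d_{\eE}(A_n,B_n)=\lim_{n}d_2(A_n,B_n)=d_2(x,y).
\]

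The genuinely non-trivial step I expect is the first one, namely the identification of the infinitesimal JKO flow with the Cheeger heat semigroup. The Cheeger-regularity assumption is tailored precisely so that the relevant machinery (closability of $\mathrm{Ch}$, lower semicontinuity of $\Ent_\mu$ along $\wW$-geodesics, compatibility of the ``Generalized Minimizing Movement'' notion of JKO with the EVI/EDE gradient flow of $\Ent_\mu$, etc.) is available, but verifying these prerequisites requires careful bookkeeping and is the real technical heart of the argument. Once $\eE=\mathrm{Ch}$ is established, the Hino-Ramirez identification and the uniqueness argument are essentially routine.
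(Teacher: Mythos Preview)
Your proposal is correct and follows essentially the same route as the paper: identify the JKO flow with the Cheeger heat semigroup (the paper simply cites \cite[Corollary~8.2]{MR3152751} for this, so your closing worry about ``careful bookkeeping'' is unwarranted), conclude that the $\eE$- and $\mathrm{Ch}$-semigroups coincide, and read off $d_{\eE}=d_{\mathrm{Ch}}=d$ on open sets via Hino--Ram\'irez and Cheeger-regularity, with uniqueness from shrinking balls. The only cosmetic difference is that you pass through the intermediate assertion $\eE=\mathrm{Ch}$ as forms (making your Hino--Ram\'irez display technically redundant), whereas the paper applies Hino--Ram\'irez twice to the common semigroup without naming that identification explicitly.
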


\begin{proof}
By \cite[Corollary 8.2]{MR3152751} the heat flow induced by the Cheeger-energy on $(\Om,d,\mu)$ coincides with the metric gradient flow (in the sense of an infinitesimal JKO-gradient flow) of $\Ent_\mu$ with respect to the Wasserstein metric with cost $d^2$. By assumption the latter
also coincides with the (Fokker-Planck) heat flow induced by the initial Dirichlet form $\eE$.
As the metric measure space $(\Om,d,\mu)$ is assumed to be Cheeger-regular we have
\begin{equation}
\lp d_{\eE}(A,B) \rp^2 = \lim_{t\to 0} (-2t \log P_t(A,B)) = \lp d_{\mathrm{Ch}}(A,B)\rp^2 = \lp d(A,B) \rp^2.
\end{equation}
Here the first and second equality are due to application of an integrated version of Varadhan's short-time asymptotic formula of Hino and Ram\'{i}rez, cf.~\cite[Theorem 1.1]{MR1988472} and the last equality is by definition of a Cheeger-regular metric measure space.\\
Furthermore, assume there are two distinct point metrics $d,d'$ fulfilling the assumptions of the theorem. As the assumptions assure that balls with respect to the metric $d$ or $d'$ have positive measure, we may argue as follows: For $x,y\in\Om$ with $d(x,y)\neq d'(x,y)$ we may construct sufficiently small open balls $B(x)$ around $x$ and $B(y)$ around $y$ such that
$
d(B(x),B(y))= d_{\eE}(B(x),B(y)) = d'(B(x),B(y))$ but $d(B(x),B(y))\neq d'(B(x),B(y))$, which is a contradiction.
\end{proof}

\begin{lemma}\label{lem:di}
The set distance $d_{\eE}$ associated with the Dirichlet form~\eqref{eq:DF} corresponding to BM with SRBD for $a\in(0,1]$ is independent of the value of $a$.
\end{lemma}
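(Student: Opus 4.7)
The plan is to show that the admissible class $\dD_0$ entering the definition of $d_{\eE}$ in \eqref{eq:disetdist} is itself independent of $a\in(0,1]$; the independence of $d_\eE$ then follows immediately from its definition as a supremum over $\dD_0$.

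First I would compute the bracket $2\eE(fh,f)-\eE(f^2,h)$ for the SRBD Dirichlet form \eqref{eq:DF}. A direct application of the product/chain rule for the carré du champ gives
\[
2\eE(fh,f)-\eE(f^2,h)
=
\int_{\Oi}|\nabla f|^2 \, h \, c\,\rd\la
+
a\int_{\dOm}|\nabla^\tau f|^2\, h\, c\,\rd\si,
\]
exactly as in the proof of Proposition~\ref{lem:intmetric=euclid metric}. Taking the supremum over nonnegative $h\in \dD_b$ with $\|h\|_{L^1(\mu)}\leq 1$ (and $\mu=c(\lambda+\sigma)$), we identify
\[
\dD_0
=
\Big\{f\in \dD_b \ \big\vert\ |\nabla f|^2\leq 1\ \lambda\text{-a.e. in }\Oi\ \text{ and }\ |\nabla^\tau f|^2\leq \tfrac 1a\ \sigma\text{-a.e. on }\dOm\Big\}.
\]
This characterization is the same as the one in Proposition~\ref{lem:intmetric=euclid metric}, the only nominal difference being that here $\dD_b=\dD(\eE)\cap L^\infty(\mu)$ instead of $\dD_b^c=\dD(\eE)\cap C_b(\Om)$, which does not affect the argument.

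The key observation is then the same redundancy argument used in Proposition~\ref{lem:intmetric=euclid metric}: for $a\in (0,1]$ one has $\tfrac 1a \geq 1$, and any $f\in H^1(\Oi)$ with $|\nabla f|\leq 1$ a.e. in $\Oi$ admits a Sobolev trace on $\pO$ which automatically satisfies $|\nabla^\tau f|\leq 1$ $\sigma$-a.e. (since the induced Riemannian distance on $\pO$ dominates the ambient Euclidean distance, and $f$ is $1$-Lipschitz in the Euclidean sense on $\Oi$). Hence the tangential constraint $|\nabla^\tau f|^2\leq \tfrac 1a$ is automatically satisfied and redundant, so
\[
\dD_0
=
\Big\{f\in \dD_b \ \big\vert\ |\nabla f|^2\leq 1\ \lambda\text{-a.e. in }\Oi\Big\}
\]
does not depend on $a\in(0,1]$. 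Since moreover $\mu=c(\lambda+\sigma)$ and $\dD(\eE)$ are also independent of $a$, the whole set $\dD_0$ is independent of $a\in(0,1]$, and therefore so is $d_{\eE}(A,B)$ for every pair $A,B$.

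I do not expect any serious obstacle here. The only point that requires some care is verifying that the tangential Sobolev gradient of the boundary trace of a Lipschitz function on $\Oi$ is controlled by the ambient Lipschitz constant; this is standard given that $\pO$ is smooth, and is precisely the step that fails for $a>1$, reflecting the phase transition already visible in \eqref{eq:def_La_bar} and in the distance $d_a$.
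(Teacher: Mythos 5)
Your proof is correct and follows essentially the same route as the paper: the paper's own proof is just a one-line pointer to the computation and redundancy argument in Proposition~\ref{lem:intmetric=euclid metric}, which you have spelled out explicitly (including the correct observation that replacing $\dD_b^c$ by $\dD_b$ is immaterial).
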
 
\begin{proof}
Recall that for all measurable sets $A,B$
\begin{equation*}
d_{\eE}(A,B):= \sup_{f\in \dD_0} \left\{ \essinf_{x\in B} f(x) - \esssup_{y\in A} f(y)\right\}
\end{equation*}
where analogous to Proposition~\ref{lem:intmetric=euclid metric}
\begin{equation*}
 \D_0
 =
 \left\{ f\in \dD_b\quad \text{s.t.}\quad  |\nabla f|\le 1\ \lambda\text{-a.e. in } \Om\right\}
\end{equation*}
is independent of $a$.
\end{proof}


\begin{theorem}
For $a\in(0,1)$ there is no metric $d$ on $\Om$ such that $(\Om,d,\mu)$ is a Cheeger-regular space and such that  the Fokker-Planck equation~\eqref{eq:FPE} can be obtained as the infinitesimal JKO gradient flow of the  $\mu$-entropy w.r.t.\ the quadratic Wasserstein distance induced by $d$.\end{theorem}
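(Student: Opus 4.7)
The plan is a proof by contradiction. I assume that for some $a\in(0,1)$ there exists a Cheeger-regular point metric $d$ on $\O$ inducing the ambient topology such that the Wentzell heat flow \eqref{eq:FPE} is the infinitesimal JKO flow of $\Ent_\mu$ for the $d$-Wasserstein distance, and I use Theorem~\ref{thm:general} together with Lemma~\ref{lem:di} to force $d=d_\Omega$; the $a=1$ JKO representation of \cite{MR4901547} then collides with the assumed $a<1$ representation on the same metric, yielding the desired contradiction.

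In detail, let $\eE$ denote the Dirichlet form \eqref{eq:DF} at the chosen $a\in(0,1)$. Theorem~\ref{thm:general} applied to $(\eE,d)$ gives $d(A,B)=d_{\eE}(A,B)$ on pairs of open $\mu$-positive sets. Lemma~\ref{lem:di} makes this set distance independent of $a\in(0,1]$, so it equals the set distance of $\eE$ at the value $a=1$. Now the $a=1$ Wentzell heat flow is, by \cite{MR4901547}, the infinitesimal JKO flow of $\Ent_\mu$ for the classical quadratic Wasserstein metric on $\O$, i.e.\ for the point metric $d_\Omega$; a second application of Theorem~\ref{thm:general}, this time to $\eE$ at $a=1$ together with $d_\Omega$, shows that $d_\Omega$ also induces the same set distance $d_{\eE}$ on open $\mu$-positive sets. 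Thus $d$ and $d_\Omega$ share the same open-set distances. I then replay the ball-shrinking step from the uniqueness part of the proof of Theorem~\ref{thm:general}: if $d(x,y)\ne d_\Omega(x,y)$ at some pair, then for sufficiently small open balls $B(x),B(y)$---which have positive $\mu$-mass since $\mu$ has full support and both metrics induce $\tau$---one would obtain $d(B(x),B(y))\ne d_\Omega(B(x),B(y))$, contradicting their common value $d_{\eE}(B(x),B(y))$. Hence $d=d_\Omega$ as point metrics. Consequently the $d$-Wasserstein JKO flow coincides with the $d_\Omega$-Wasserstein JKO flow, which is the $a=1$ Wentzell heat flow; by the standing assumption it must simultaneously be the $a$-Wentzell heat flow with $a<1$, and inspection of the boundary equation $\partial_t\g=a\Delta^\tau\g-\partial_N\o$ in \eqref{eq:FPE} shows these two evolutions are distinct, giving the contradiction.

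The step I expect to be the main obstacle is the second invocation of Theorem~\ref{thm:general}, which requires $(\O,d_\Omega,\mu)$ to be Cheeger-regular in the sense of Definition~\ref{def:cheeger_regular}. The singular boundary component of $\mu=c(\lambda+\sigma)$ prevents a direct appeal to the $\mathrm{RCD}$-type criteria of \cite{MR4319821,MR4498486}; instead I would identify the Cheeger energy of $(\O,d_\Omega,\mu)$ with $\eE$ at $a=1$ by hand---the intrinsic distance computation of Proposition~\ref{lem:intmetric=euclid metric} already indicates how the tangential gradient on $\pO$ emerges from the $d_\Omega$ geometry---and then deduce the set-distance condition \eqref{eq:cheegerreg} via the Hino--Ram\'irez formula exactly as in the proof of Theorem~\ref{thm:general}.
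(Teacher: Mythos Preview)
Your strategy is the paper's: assume such a $d$ exists, use Theorem~\ref{thm:general} and Lemma~\ref{lem:di} to force $d=d_\O$, then contradict the $a$-dependence of the Wentzell flow via \cite{MR4901547}. The paper's proof is terser but follows the same line.

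Where you diverge is in routing the identification $d=d_\O$ through a \emph{second} invocation of Theorem~\ref{thm:general} at $a=1$, which, as you correctly flag, would require Cheeger-regularity of $(\O,d_\O,\mu)$. This detour is avoidable. After the first application of Theorem~\ref{thm:general} yields $d(A,B)=d_{\eE}(A,B)$ for open $A,B$ of positive measure, you only need the elementary identity $d_{\eE}(A,B)=d_\O(A,B)$, and this follows directly from the explicit description of $\dD_0$ as (essentially) $\Lip_1(d_\O)$ obtained in the proofs of Proposition~\ref{lem:intmetric=euclid metric} and Lemma~\ref{lem:di}: test with $f(\cdot)=d_\O(\cdot,A)$ for one inequality, and use the $1$-Lipschitz bound for the other. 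With $d(A,B)=d_\O(A,B)$ in hand, the ball-shrinking argument from the uniqueness part of Theorem~\ref{thm:general}---which uses only that both metrics induce the ambient topology and that $\mu$ has full support, not Cheeger-regularity of the comparison metric---forces $d=d_\O$, and your contradiction via \cite{MR4901547} closes the proof. So the obstacle you anticipate does not actually arise; the paper sidesteps it by asserting directly that the unique metric with the right set distances ``is the one corresponding to the case $a=1$'', which amounts to the same shortcut.
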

\begin{proof} 
Assume there is a metric $d$ fulfilling the conditions of the proposition.
Since $\mu=c(\lambda+\sigma)$ has full support in $\O$ Theorem~\ref{thm:general} guarantees that the induced set distance $d$ coincides with the set distance $d_{\eE}$ on open sets of positive measure and $d$ is unique. However, by Lemma~\ref{lem:di} the set distance $d_{\eE}$ is independent of the value of $a\in(0,1]$.
The unique point metric $d$ whose corresponding set distance $d$ coincides with $d_{\eE}$ is the one corresponding to the case $a=1$.
The metric gradient flow with respect to the $d^2$-Wasserstein distance cannot describe the Fokker-Planck equation~\eqref{eq:FPE} for $a\in(0,1)$, as the Fokker-Planck equation clearly depends on the value of $a$ while the metric and driving functional do not.
\end{proof}

 \nocite{MR2665414,GTT}
\subsection*{Acknowledgement}
L.M. was funded by FCT - Funda\c{c}\~ao para a Ci\^encia e a Tecnologia through a personal grant 2020/00162/CEECIND (DOI 10.54499/2020.00162.CEECIND/CP1595/CT0008) and project UIDB/00208/2020 (DOI 10.54499/UIDB/00208/2020)
\bibliographystyle{abbrv}
\bibliography{bib}

\begin{thebibliography}{10}

\bibitem{adams2013large}
S.~Adams, N.~D., M.~Peletier, and J.~Zimmer.
\newblock Large deviations and gradient flows.
\newblock {\em Philosophical Transactions of the Royal Society A: Mathematical,
  Physical and Engineering Sciences}, 371(2005):20120341, 2013.

\bibitem{MR2401600}
L.~Ambrosio, N.~Gigli, and G.~Savar\'e.
\newblock {\em Gradient flows in metric spaces and in the space of probability
  measures}.
\newblock Lectures in Mathematics ETH Z\"urich. Birkh\"auser Verlag, Basel,
  second edition, 2008.

\bibitem{MR3152751}
L.~Ambrosio, N.~Gigli, and G.~Savar\'e.
\newblock Calculus and heat flow in metric measure spaces and applications to
  spaces with {R}icci bounds from below.
\newblock {\em Invent. Math.}, 195(2):289--391, 2014.

\bibitem{MR4096131}
A.~Aurell and B.~Djehiche.
\newblock Behavior near walls in the mean-field approach to crowd dynamics.
\newblock {\em SIAM J. Appl. Math.}, 80(3):1153--1174, 2020.

\bibitem{bormann2025cheegertypeinequalitydriftlaplacian}
M.~Bormann.
\newblock A {C}heeger-type inequality for the drift {L}aplacian with
  {W}entzell-type boundary condition, 2025.
\newblock arXiv: 2503.16093.

\bibitem{brw}
M.~Bormann, M.~von Renesse, and F.-Y. Wang.
\newblock Functional inequalities for {B}rownian motion on manifolds with
  sticky-reflecting boundary diffusion.
\newblock {\em Probab. Theory and Related Fields}, 2024.

\bibitem{casteras2025largedeviationsstickyreflectingbrownian}
J.-B. Casteras, L.~Monsaingeon, and L.~Nenna.
\newblock Large deviations for sticky-reflecting {B}rownian motion with
  boundary diffusion, 2025.
\newblock arXiv: 2501.11394.

\bibitem{MR4901547}
J.-B. Casteras, L.~Monsaingeon, and F.~Santambrogio.
\newblock Sticky-reflecting diffusion as a {W}asserstein gradient flow.
\newblock {\em J. Math. Pures Appl. (9)}, 199:Paper No. 103721, 32, 2025.

\bibitem{MR1708448}
J.~Cheeger.
\newblock Differentiability of {L}ipschitz functions on metric measure spaces.
\newblock {\em Geom. Funct. Anal.}, 9(3):428--517, 1999.

\bibitem{dawsont1987large}
D.~A. Dawson and J.~G{\"a}rtner.
\newblock Large deviations from the {M}c{K}ean-{V}lasov limit for weakly
  interacting diffusions.
\newblock {\em Stochastics: An International Journal of Probability and
  Stochastic Processes}, 20(4):247--308, 1987.

\bibitem{MR4319821}
L.~Dello~Schiavo and K.~Suzuki.
\newblock Rademacher-type theorems and {S}obolev-to-{L}ipschitz properties for
  strongly local {D}irichlet spaces.
\newblock {\em J. Funct. Anal.}, 281(11):Paper No. 109234, 63, 2021.

\bibitem{MR4498486}
L.~Dello~Schiavo and K.~Suzuki.
\newblock Sobolev-to-{L}ipschitz property on {${QCD}$}-spaces and applications.
\newblock {\em Math. Ann.}, 384(3-4):1815--1832, 2022.

\bibitem{Dudley2002}
R.~M. Dudley.
\newblock {\em Real Analysis and Probability}.
\newblock Cambridge University Press, Cambridge, UK, 2 edition, 2002.

\bibitem{MR2260560}
J.~Feng and T.~G. Kurtz.
\newblock {\em Large deviations for stochastic processes}, volume 131 of {\em
  Mathematical Surveys and Monographs}.
\newblock American Mathematical Society, Providence, RI, 2006.

\bibitem{MR2665414}
A.~Figalli and N.~Gigli.
\newblock A new transportation distance between non-negative measures, with
  applications to gradients flows with {D}irichlet boundary conditions.
\newblock {\em J. Math. Pures Appl. (9)}, 94(2):107--130, 2010.

\bibitem{GTT}
N.~Gigli, L.~Tamanini, and D.~Trevisan.
\newblock Viscosity {S}olutions of {H}amilton–{J}acobi {E}quation in
  $\mathrm{RCD}({K},\infty)$ {S}paces and {A}pplications to {L}arge
  {D}eviations.
\newblock {\em Potential Analysis}, 2024.

\bibitem{MR3613700}
M.~Grothaus and R.~Vo{\ss}hall.
\newblock Stochastic differential equations with sticky reflection and boundary
  diffusion.
\newblock {\em Electron. J. Probab.}, 22:Paper No. 7, 37, 2017.

\bibitem{MR1988472}
M.~Hino and J.~A. Ram\'irez.
\newblock Small-time {G}aussian behavior of symmetric diffusion semigroups.
\newblock {\em Ann. Probab.}, 31(3):1254--1295, 2003.

\bibitem{MR126883}
N.~Ikeda.
\newblock On the construction of two-dimensional diffusion processes satisfying
  {W}entzell's boundary conditions and its application to boundary value
  problems.
\newblock {\em Mem. Coll. Sci. Univ. Kyoto Ser. A. Math.}, 33:367--427,
  1960/61.

\bibitem{JKO98}
R.~Jordan, D.~Kinderlehrer, and F.~Otto.
\newblock The variational formulation of the {F}okker-{P}lanck equation.
\newblock {\em SIAM J. Math. Anal.}, 29(1):1--17, 1998.

\bibitem{MR4575023}
V.~Konarovskyi.
\newblock Coalescing-fragmentating {W}asserstein dynamics: particle approach.
\newblock {\em Ann. Inst. Henri Poincar\'{e} Probab. Stat.}, 59(2):983--1028,
  2023.

\bibitem{konarovskyi2021spectralgapestimatesbrownian}
V.~Konarovskyi, V.~Marx, and M.~von Renesse.
\newblock Spectral gap estimates for {B}rownian motion on domains with
  sticky-reflecting boundary diffusion, 2021.
\newblock arXiv: 2106.00080.

\bibitem{MR4704529}
V.~Konarovskyi and M.-K. von Renesse.
\newblock Reversible coalescing-fragmentating {W}asserstein dynamics on the
  real line.
\newblock {\em J. Funct. Anal.}, 286(8):Paper No. 110342, 60, 2024.

\bibitem{leonard2007large}
C.~L{\'e}onard.
\newblock A large deviation approach to optimal transport.
\newblock arXiv preprint arXiv:0710.1461, 2007.

\bibitem{MR2873864}
C.~L\'eonard.
\newblock From the {S}chr\"odinger problem to the {M}onge-{K}antorovich
  problem.
\newblock {\em J. Funct. Anal.}, 262(4):1879--1920, 2012.

\bibitem{lisini2007characterization}
S.~Lisini.
\newblock Characterization of absolutely continuous curves in {W}asserstein
  spaces.
\newblock {\em Calculus of variations and partial differential equations},
  28(1):85--120, 2007.

\bibitem{MR3269725}
A.~Mielke, M.~A. Peletier, and D.~R.~M. Renger.
\newblock On the relation between gradient flows and the large-deviation
  principle, with applications to {M}arkov chains and diffusion.
\newblock {\em Potential Anal.}, 41(4):1293--1327, 2014.

\bibitem{MR1842429}
F.~Otto.
\newblock The geometry of dissipative evolution equations: the porous medium
  equation.
\newblock {\em Comm. Partial Differential Equations}, 26(1-2):101--174, 2001.

\bibitem{MR1760620}
F.~Otto and C.~Villani.
\newblock Generalization of an inequality by {T}alagrand and links with the
  logarithmic {S}obolev inequality.
\newblock {\em J. Funct. Anal.}, 173(2):361--400, 2000.

\bibitem{quattrocchi2024variationalstructuresfokkerplanckequation}
F.~Quattrocchi.
\newblock Variational structures for the {F}okker--{P}lanck equation with
  general {D}irichlet boundary conditions.
\newblock {\em arXiv preprint arXiv:2403.07803}, 2024.

\bibitem{MR1477263}
K.-T. Sturm.
\newblock Is a diffusion process determined by its intrinsic metric?
\newblock {\em Chaos Solitons Fractals}, 8(11):1855--1860, 1997.

\bibitem{MR929208}
S.~Takanobu and S.~Watanabe.
\newblock On the existence and uniqueness of diffusion processes with
  {W}entzell's boundary conditions.
\newblock {\em J. Math. Kyoto Univ.}, 28(1):71--80, 1988.

\bibitem{MR121855}
A.~D. Ventcel'.
\newblock On boundary conditions for multi-dimensional diffusion processes.
\newblock {\em Theor. Probability Appl.}, 4:164--177, 1959.

\bibitem{MR287612}
S.~Watanabe.
\newblock On stochastic differential equations for multi-dimensional diffusion
  processes with boundary conditions. {II}.
\newblock {\em J. Math. Kyoto Univ.}, 11:545--551, 1971.

\end{thebibliography}
\end{document}